\documentclass[fleqn,10pt]{wlscirep}
\usepackage[utf8]{inputenc}
\usepackage[T1]{fontenc}
\usepackage{amsmath}
\usepackage{amssymb}
\usepackage{amsthm}
\usepackage{array}
\usepackage{color}
\usepackage{graphicx}
\usepackage[utf8]{inputenc}
\usepackage{multirow}
\usepackage{url}
\usepackage{blindtext}
\usepackage[square,numbers]{natbib}
\usepackage{ulem}

\newtheorem{rem}{\bf Remark}[section]
\newtheorem{thm}{Theorem}[section]
\newtheorem{cor}[thm]{Corollary}

\newtheorem{property}[thm]{Property}
\newtheorem{prop}[thm]{Proposition}

\newtheorem{definition}{Definition}








\newcommand{\adim}{\operatorname{adim}}

\title{On the $(k,\ell)$-anonymity of networks via their $k$-metric antidimension}

\author[1]{Elena Fern\'andez}
\author[1]{Dorota Kuziak}
\author[1,*]{Manuel Munoz-Marquez}
\author[2]{Ismael G. Yero}
\affil[1]{Departamento de Estad\'istica e Investigaci\'on Operativa, Universidad de C\'adiz, Spain}
\affil[2]{Departamento de Matem\'aticas, Universidad de C\'adiz, Spain}

\affil[*]{manuel.munoz@uca.es}

\keywords{$(k,\ell)$-anonymity measure, $k$-antiresolving set, $k$-metric antidimension, torus, cylinder, $2$-dimensional Hamming graph, integer linear programming formulation, random tree, random sparse graph; random dense graph}

\begin{abstract}
This work focuses on the $(k,\ell)$-anonymity of some networks as a measure of their privacy against active attacks. 
Two different types of networks are considered. The first one consists of graphs with a predetermined structure, namely cylinders, toruses, and $2$-dimensional Hamming graphs, whereas the second one is formed by randomly generated graphs. In order to evaluate the $(k,\ell)$-anonymity of the considered graphs, we have computed their $k$-metric antidimension. To this end, we have taken a combinatorial approach for the graphs with a predetermined structure, whereas for randomly generated graphs we have developed an integer programming formulation and computationally tested its implementation. The results of the combinatorial approach, as well as those from the implementations indicate that, according to the $(k,\ell)$-anonymity measure, only the $2$-dimensional Hamming graphs and some general random dense graphs are achieving some higher privacy properties.\\

{\bf AMS Subj. Class. (2020)}: 05C12; 05C76; 90C27; 90C10; 05C80
\end{abstract}
\begin{document}

\flushbottom
\maketitle

\thispagestyle{empty}

\section{Introduction}\label{sect:k_anti}

Social network analysis is the process of investigating social structures while making use of networks and graph theory methods. 
Such analysis is widely developed in our modern society. 
This is motivated by several factors including for instance the increasing need of advances in basic computing and information technologies. These advances aim at improving systems and services frequently integrated within a variety of important business and societal functions, like e-commerce, health care, education, manufacturing, and personal interactions, among others.

The usefulness of social network analysis is doubtless, since many different social services can benefit from these investigations. These benefits are, however, not cost-free, as the privacy of the users of a network would be compromised if some involved entity could deliver sensitive data such as e-mails, instant messages, or relationships. 
A basic solution to the above issue prevents such a risk by applying some anonymization process to the released social network. For example, potentially identifying attributes can be removed. Nonetheless, this naive approach is usually not enough to guarantee the privacy of  users personal information in a network. In fact, there is always some potential probability of disclosing any user. Therefore, it would be highly desirable that every public social network would incorporate a measure of this \emph{disclosing} probability.

Some primitive approaches to measuring the above probability were first proposed in \cite{Trujillo-Rasua-2016} and later slightly improved in \cite{Mauw-2019}. 
The main idea of these approaches was to introduce a measure against active privacy attacks for social networks called $(k,\ell)$-anonymity. 
Specifically, in \cite{Trujillo-Rasua-2016} it was stated that a social graph achieving $(k,\ell)$-anonymity satisfies that the probability of disclosing any given user of that network, in the presence of at most $\ell$ attacker nodes in the network, is $1/k$. 
The $(k,\ell)$-anonymity is theoretically supported by a graph theory parameter called $k$-metric antidimension, which was introduced in \cite{Trujillo-Rasua-2016} as well. 

In the definition of the $(k, \ell)$-\emph{anonymity}, the integer $k$ is used as a privacy threshold, whereas the value $\ell$ stands for an upper bound on the expected number of attacker vertices in a given network. 
Since an attacker entity cannot easily control many vertices of the network, it is usually accepted that the number of attacker nodes in a network is likely to be significantly smaller than the total number of vertices, so it is assumed that $\ell$ is a small integer number. 

Given the tight relationship between the above two concepts, a key point for stating the $(k, \ell)$-anonymity of a given graph $G$ is finding its $k$-metric antidimension. This research topic has received the attention of several researchers. 
The $k$-metric antidimension has been studied as a graph parameter separately from the $(k,\ell)$-anonymity in several recent works, like for instance in~\cite{Cangalovic-2018,Kratica-2019,Mauw-2019,Mauw-2018,Tang-2021,Trujillo-Rasua-2016-a}, and also in the more general survey \cite{Kuziak-2021}, which contains a compilation of the main contributions on this topic. 
Further studies focusing on the combinatorial and computational properties of the $k$-metric antidimension of a graph have contributed to a better application and understanding of the $(k,\ell)$-anonymity.
For instance, the $k$-metric antidimension of some classical families of graphs like complete bipartite, cycles, and some others, was considered in the seminal paper \cite{Trujillo-Rasua-2016}; that of some generalized Petersen graphs was studied in \cite{Kratica-2019}; of some wheel-related social graphs in \cite{Tang-2021}; and of some trees and unicyclic graphs (for $k=1$) in \cite{Trujillo-Rasua-2016-a}. 
For a fairly complete compilation of results on this topic, we address the reader to the recent survey \cite{Kuziak-2021}, which deals with several other related topics as well. 

Existing work indicates that, unfortunately, for a large number of networks the value of the probability $1/k$ is much higher than what could be considered reasonably good. Some examples were given, for instance,  in \cite{DasGupta-2019} where the value of the $(k, \ell)$-anonymity achieved by a number of real social networks was presented.
In particular, most social networks only satisfy $(1,1)$-anonymity. 
This is clearly negative as it means that in most social networks (graphs) there is a single vertex with the property of potentially disclosing at least one vertex of the graph. In other words, most social networks (graphs) are quite vulnerable.

From the discussion above, we may deduce that it is clearly desirable to identify graphs that satisfy $(k, \ell)$-anonymity for some $k$ as large as possible, and any given value of $\ell$. 
Therefore, despite the existing results, there is still a need to study (compute or bound) the $k$-metric antidimension for graph classes for which this parameter remains unknown, so their $(k,\ell)$-anonymity can be established. Furthermore, the use of new methods for making such computations deserves attention. The present work pursues these two goals: to provide more insight on the knowledge of the $k$-metric antidimension, by considering graphs with a predetermined structure as well as   randomly generated ones, and  to develop a new perspective for addressing this topic. 

In particular, the objective of this paper is twofold. On the one hand, it presents an evaluation of privacy properties derived from the $(k,\ell)$-anonymity measure, for several types of networks.  
As a previous step for this evaluation, we have obtained new theoretical results on the $k$-metric antidimension. 
For several types of networks with a specific structure, these results have been obtained using combinatorial techniques, whereas for the case of randomly generated networks we have resorted to optimization. Specifically, we propose an integer programming mathematical optimization formulation to obtain the $(k, \ell)$-anonymity for a general network, which can be solved with an off-the-self solver. 
To the best of our knowledge, this methodology has not yet been applied in this context.

The remainder of this paper is structured as follows. 
Section \ref{sect:defin} introduces the notation that we will use and gives the basic definitions and results. 
Then, in Section \ref{sec:Cyl-To-Ham}, we give the exact values of the $k$-metric antidimension of toruses, cylinders and $2$-dimensional Hamming graphs for the corresponding feasible values of $k$. To facilitate the reading of the paper the proofs of all the results of this section are given in the Appendix.
In Section \ref{sect:k_ILP}  we introduce an integer programming formulation for the $k$-metric antidimension problem. 
This formulation has been implemented and computationally tested on some regular and almost regular random networks.
The random graphs generated have three different structures. We first consider random trees, next some random general sparse graphs and, finally, some random dense graphs.
The computational experiments are described in Section \ref{sect:compu}, where we also present and analyze the obtained numerical results. Based on these results, we then state the $(k, \ell)$-anonymity satisfied by such random networks. The paper ends in Section \ref{sect:conclu} where we derive some conclusions and avenues for future research.

\section{Definitions and basic concepts}\label{sect:defin}

Throughout this paper we consider an undirected non-weighted and connected graph  $G=(V, E)$ with $|V|=n$ without loops or multiple edges. 
We further assume that the users are represented in a graph by its vertices. While the edges represent some kind of relationship between the users.
The length of any given path in $G$ is given by its number of arcs and, for any pair of vertices $u,v\in V$, the distance between $u$ and $v$ is the length of a shortest $u,v$-path, which is denoted by $d(u,v)$ (or $d_{uv}$ for short).
For a given vertex $v\in V(G)$, the \emph{eccentricity} of $v$ is $e(v)=\max\{d(v,x)\,:\,x\in V(G)\}$. 
Moreover, for the vertex $v$, any vertex $u$ such that $d(v,u)=e(v)$ is called an \emph{eccentric vertex} of $v$. Note that these are the furthest possible nodes from $v$. 
Now, the following concepts are the key point of our whole work.

\begin{definition}[\cite{Trujillo-Rasua-2016}]
\label{def:ars_adm}
Let $k\in\mathbb{N}$ and let $G=(V,E)$ be a connected graph.
\begin{itemize}
\item A set $S\subset V$ is a $k$-\emph{antiresolving set} $($$k$-\emph{ARS} for short$)$ for $G$, if $k$ is the largest integer such that for all $u\notin S$ there exists a set $S_u\subseteq V\setminus (S\cup \{u\})$ with $|S_u|\geq k-1$ and where $d_{uv}=d_{xv}$ for every $v\in S$ and every $x\in S_u$.  
\item The $k$-\emph{metric antidimension} of $G$, denoted $\adim_k(G)$, is the cardinality of a smallest $k$-ARS for $G$.
\item A $k$-ARS of cardinality $\adim_k(G)$ is called a $k$-\emph{antiresolving basis} $($$k$-\emph{ARB} for short$)$.
\end{itemize}
\end{definition}

With the above concepts, Trujillo-Rasua \& Yero \cite{Trujillo-Rasua-2016} introduced the following measure corresponding to a probability index that quantifies how secure a network is with respect to active attacks to its privacy.

\begin{definition}[\cite{Trujillo-Rasua-2016}]
\label{def:k-ell-anonymity}
A graph $G$ meets $(k, \ell)$-\emph{anonymity} with respect to active attacks, if $k$ is the smallest positive integer such that the $k$-metric antidimension of $G$ is not larger than $\ell$.
\end{definition}

The concepts from Definition~\ref{def:ars_adm} can be also seen from a different perspective.
For a given vertex set $S\subset V$, we define the following equivalence relation $\mathcal{R}_S$.
Two vertices $x,y\in V$ are related by $\mathcal{R}_S$ if for every vertex $z\in S$ it follows that $d_{xz}=d_{yz}$.
From now on, for a given set $S\subset V$, we consider $\mathcal{Z}_S=\{Z^1,\dots,Z^r\}$, for some $r\ge 1$, as the set of equivalence classes defined by $\mathcal{R}_S$.
By using the terminology above, it is readily seen that any vertex set $S\subseteq V$ is a $k$-ARS for a graph $G$ with $k=\min\{|Z^i|\,:\,Z^i\in \mathcal{Z}_S\}$.
This approach to the definition of $k$-ARS turns out to be in general more useful while dealing with this topic.
Since $\bar{r}=\frac{n}{k}-1$ is an upper bound on the number of non-empty sets $Z^r$ defined above, in the following we assume that $R=\{1, \dots, \overline{r}\}$.\\

Observe that, according to Definition \ref{def:k-ell-anonymity}, the problem of determining $\adim_k(G)$ for a given graph $G$ can be stated as the following optimization problem: 
\begin{equation}
    \tag{$k$-MAD}
   \adim_k(G)\,=\, \min \{|S|: S \mbox{ is a $k$-ARS  for } G\}\label{$k$-MAD Problem}
\end{equation}

\medskip
We first recall that solving $k$-MAD is in general NP-hard, as independently proved in \cite{Chatterjee-2019} and \cite{Zhang-2017}. 
Moreover, it is clear that a given graph $G$ may not contain a $k$-ARS for every value $k$. 
In connection with this, it is said that a graph $G$ is $\kappa$-metric antidimensional if $k=\kappa$ is the largest integer for which $G$ contains a $k$-ARS. 
In contrast to \eqref{$k$-MAD Problem}, which is NP-hard, it was proved in \cite{Chatterjee-2019} that the value $\kappa$ can be found polynomially in the order $n$ of $G$. However, knowing that a graph $G$ is $\kappa$-metric antidimensional does not imply that there exists a $k$-ARS for another $k\in \{2,\dots,\kappa-1\}$ (note that we can always find a $1$-ARS). 
One reason for this relies on the fact that there is no monotonicity (with respect to $k$) for $\adim_k(G)$. 
Nevertheless, there are families of graphs for which the monotonicity of $\adim_k(G)$ relative to $k$ has been confirmed. 
For instance, in \cite{DasGupta-2019} it was proved that a $\kappa$-metric antidimensional tree $T$ contains a $k$-ARS for every $k\in \{1,\dots,\kappa\}$. 
Still, even if a tree $T$ contains a $k$-ARS for every $k\in \{1,\dots,\kappa\}$, we must also recall that a general procedure for computing an optimal $k$-ARSs for an arbitrary value of $k$ in the interval $\{1,\dots,\kappa\}$ is not yet known. 
That is, for general trees it is not known how difficult it is to find the $k$-metric antidimension. 
This supports the interest of considering the case of randomly generated trees and computing their $k$-metric antidimension.

Regarding this situation, the existence of $k$-ARSs can be dealt with the following combinatorial problem:
\begin{equation}
   \mbox{For a given integer $k$ and a graph $G$: Does $G$ contains a $k$-ARS?}
   \tag{$k$-ARS}
   \label{$k$-ARS Problem}
\end{equation}

In the remainder of this paper, when a graph $G$ has no $k$-ARS for some $k\in \{1,\dots,\kappa\}$, we will say that $\adim_k(G)=+\infty$.

The optimization problem \eqref{$k$-MAD Problem} and the combinatorial one \eqref{$k$-ARS Problem} above shall be considered during our whole exposition. Specifically, for any studied graph $G$, we shall compute the largest value $\kappa$ for which $G$ is $\kappa$-metric antidimensional. This will provide us the interval of integers in which the $k$-metric antidimension might be computed. We shall then consider such interval $\{1,\dots,\kappa\}$ for $G$, and study the existence of $k$-ARSs for every $k\in\{1,\dots,\kappa\}$. Hence, whenever possible, we will compute the value of $\adim_k(G)$. With the obtained results, the $(k, \ell)$-anonymity satisfied for the corresponding graphs shall be stated. 

\section{Networks with a predetermined structure}\label{sec:Cyl-To-Ham}

This section considers some graphs that are obtained as the Cartesian product of some other graphs. Such graph products are constructions that are widely used in several investigations covering theoretical studies as well as applied ones. They have a symmetric structure, which intuitively makes them good candidates to satisfy $(k, \ell)$-anonymity for large values of $k$. The Cartesian product of two graphs $G$ and $H$, is the graph $G\Box H$ with vertex set $V(G\Box H)=V(G)\times V(H)$. Two vertices $(g,h),(g',h')$ are adjacent in $G\Box H$ if either $g=g'$ and $hh'\in E(H)$, or $gg'\in E(G)$ and $h=h'$. For a comprehensive compendium on product graphs, their structure, applications, recognition, etc. we suggest the book \cite{Hammack-2011}. 

We first recall well-known results for grid graphs, whose $k$-antidimension is known, and then we focus on cylinders, toruses and $2$-dimensional Hamming graphs, for which the $(k, \ell)$-anonymity has not yet been studied. The \emph{grid graph} $P_r\Box P_s$ is the Cartesian product of two paths $P_r=u_1u_2\cdots u_r$ and $P_s=v_1v_2\cdots v_s$. 

The $k$-metric antidimension of the grid graph $P_r\Box P_s$ (for $r,t\ge 2$) was studied in \cite{Cangalovic-2018}, and it is as follows. First, $P_r\Box P_s$ is $4$-metric antidimensional when $r,s$ are both odd; and otherwise it is $2$-metric antidimensional. When $r,s$ are both odd it is not possible to find $3$-ARSs. In consequence, the following formulas are known (see \cite{Cangalovic-2018}).
\begin{itemize}
  \item $\adim_1(P_r\Box P_s)=1$ 
  \item $\adim_2(P_r\Box P_s)=\left\{\begin{array}{ll}
                            2, & \mbox{if $r,s$ are both even,} \\
                            1, & \mbox{otherwise.} \\
                          \end{array}
\right.$ 
  \item $\adim_4(P_r\Box P_s)=1$ (if $r,s$ are both odd). 
\end{itemize}

From the results above, one can deduce that the grids $P_r\Box P_s$ are only satisfying $(1,1)$-anonymity, since (in the presence of already one attacker node) the smallest value for which $\adim_k(P_r\Box P_s)\le 1$ is $k=1$.\\

We next consider the cylinder $P_r\Box C_s$, the torus $C_r\Box C_s$ and the so-called $2$-dimensional Hamming graph $K_r\Box K_r$, where, following the usual notation, $C_r=u_0u_1\cdots u_{r-1}u_0$ and $K_r$, respectively represent a cycle and a complete graph of order $r$. We first find the values $\kappa$ for which such graphs are $\kappa$-metric antidimensional and then establish their $(k,l)$-anonymity. 
In order to facilitate the flow of our exposition, we present in this section the obtained results and include all the proofs in the Appendix.

\begin{prop}
\label{prop:k_antidim_toru_cyl_Ham}
Let $r,s\ge 2$ be two integers.
\begin{itemize}
  \item[{\em (i)}] If $r\ge 2$ and $s\ge 3$, then $P_r\Box C_s$ is $4$-metric antidimensional if $r,s$ are odd, it is $3$-metric antidimensional if $s$ is even, and otherwise, it is $2$-metric antidimensional.
  \item[{\em (ii)}] If $r,s\ge 3$, then $C_r\Box C_s$ is $4$-metric antidimensional if $r,s$ have the same parity, and otherwise, it is $2$-metric antidimensional.
  \item[{\em (iii)}] If $r\ge 4$, then $K_r\Box K_r$ is $(2r-2)$-metric antidimensional.
\end{itemize}
\end{prop}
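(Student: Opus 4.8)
The plan is to work throughout with the equivalence-class reformulation introduced before the proposition: for a candidate set $S\subsetneq V$ the only relevant quantity is $\min_i|Z^i|$, the smallest size of a class of $\mathcal{R}_S$ on $V\setminus S$, and the graph is $\kappa$-metric antidimensional precisely when $\kappa=\max_{S}\min_i|Z^i|$. So for each of the three products I would (a) exhibit one set $S$ whose smallest class has the asserted size $\kappa$ (lower bound, i.e. a $\kappa$-ARS exists), and (b) show that \emph{every} $S$ admits a class of size at most $\kappa$ (upper bound, i.e. no $k$-ARS with $k>\kappa$); the parity hypotheses then only decide which value the construction in (a) realizes. The common input is the product distance $d((g,h),(g',h'))=d_G(g,g')+d_H(h,h')$ together with $d_{P_r}(u_i,u_j)=|i-j|$, $d_{C_s}(u_i,u_j)=\min(|i-j|,s-|i-j|)$ and $d_{K_r}(i,j)=[i\neq j]$, and the key technical device is the \emph{refinement principle}: if $S_0\subseteq S$ then $\mathcal{R}_S$ refines $\mathcal{R}_{S_0}$, so every $\mathcal{R}_S$-class is contained in an $\mathcal{R}_{S_0}$-class; consequently, if $Z_0$ is an $\mathcal{R}_{S_0}$-class with $Z_0\not\subseteq S$, some $\mathcal{R}_S$-class has size at most $|Z_0\setminus S|\le|Z_0|$.

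I would settle (iii) first, as it is the cleanest. Here $d((a,b),(c,d))=[a\neq c]+[b\neq d]$, so a single attacker $z$ splits $V\setminus\{z\}$ into the "exactly one coordinate agrees" sphere of size $2r-2$ and the "no coordinate agrees" sphere of size $(r-1)^2$; for $r\ge 4$ the minimum is $2r-2$, so $\{z\}$ is a $(2r-2)$-ARS and $\kappa\ge 2r-2$. For the upper bound I apply the refinement principle with $S_0=\{z\}$, $z\in S$, taking $Z_0$ to be the distance-one sphere of $z$, which is exactly the row and column through $z$. This yields a class of size at most $2r-2$ \emph{unless} that whole row and column lie in $S$. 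If this happened for every $z\in S$ then, starting from one attacker, its full row would lie in $S$, each vertex of that row would drag in its full column, and so on, forcing $S=V$; hence for $S\subsetneq V$ some class has size at most $2r-2$, and $\kappa=2r-2$.

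For the cylinder (i) and the torus (ii) the two steps are the same, but the constructions become parity-sensitive because an even cycle has a unique antipode while an odd cycle does not. When no even cycle is present (both factors odd) a single vertex already works: each nonzero coordinate distance is attained an even number of times, every distance sphere then has size a multiple of the expected symmetry, and the smallest is exactly $4$ for both $P_r\Box C_s$ and $C_r\Box C_s$. When an even cycle occurs a single vertex fails, since its antipodal sphere shrinks to a single vertex; the remedy I would use is to adjoin a \emph{redundant} antipodal partner. For the even torus, $S=\{z,\bar z\}$ with $\bar z=z+(r/2,s/2)$ satisfies $d(v,\bar z)=(r/2+s/2)-d(v,z)$, so $\bar z$ does not refine the classes but deletes the unique farthest singleton, leaving minimum class size $4$; for the $s$-even cylinder, $S=\{(u_1,v_0),(u_r,v_{s/2})\}$ uses the two path endpoints (whose distances sum to $r-1$) and the cycle antipode (whose distances sum to $s/2$), so again the second point is redundant and removes the unique farthest vertex, leaving minimum class size $3$. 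The mixed-parity cases have an irreducible size-$2$ sphere and collapse to $\kappa=2$.

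The main obstacle is the upper bound in the presence of an even cycle. There the refinement principle is not by itself conclusive: since single vertices already produce singleton antipodal classes, refinement only shows that any $S$ not closed under the antipodal map has a class of size at most $1\le\kappa$, so it merely reduces the problem to antipode-closed sets rather than finishing it. To complete the argument I would, for an arbitrary antipode-closed $S$, decompose each profile as the sum of its two coordinatewise parts and exploit the cycle reflection property (two vertices are equidistant from a fixed point exactly when they are reflections of it), together with the projections of $S$ onto the two factors, to locate explicitly a profile realized by at most $\kappa$ vertices. Carrying this case analysis cleanly through every parity combination, and handling the boundary effects of the path factor in the cylinder, is where the real work lies; by contrast the lower-bound constructions and the entire Hamming case are comparatively routine.
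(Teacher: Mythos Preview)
Your plan is essentially the paper's: the same lower-bound witnesses (a central vertex when both factors are odd, a diametral pair when an even antipode exists, a single vertex for $K_r\Box K_r$), and the same ``eccentric-vertex'' viewpoint for the upper bounds. Two remarks are worth making.

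First, for the coarse upper bound you work harder than needed. The paper simply invokes Remark~\ref{rem_Delta}: since $P_r\Box C_s$, $C_r\Box C_s$ and $K_r\Box K_r$ have maximum degree $4$, $4$ and $2r-2$ respectively, they are $k$-metric antidimensional for some $k$ at most that value. This is just your refinement principle applied to any vertex $w\notin S$ adjacent to some $z\in S$ (such a $w$ exists by connectedness), so the row/column closure argument you give for $K_r\Box K_r$ is correct but unnecessary.

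Second, the part you flag as ``where the real work lies'' is indeed the only non-routine step, and your description of it stays at the level of a plan. The paper's concrete device is an \emph{eccentric-vertex cascade}. Take any $z\in S$ and look at its eccentric set $E(z)$. In the relevant parity regimes $|E(z)|$ is small (one vertex for the $s$-even cylinder when $r$ is even, two otherwise; two for the mixed-parity torus; two for the $s$-odd, $r$-even cylinder). If $E(z)\not\subseteq S$ then $E(z)\setminus S$ is contained in a class, forcing the minimum class size below the target and finishing. If instead $E(z)\subseteq S$, the paper then looks at the eccentric sets of those new vertices, which again must lie in $S$, and iterates: in the cylinder this quickly forces an entire boundary copy of $C_s$ (or both copies) into $S$, and in either product one reaches a set whose minimum class size is at most $1$, contradicting the assumed $k$-ARS with $k\ge 3$ (respectively $k\ge 4$). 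This cascade is exactly the ``antipode-closed then reflect and project'' analysis you allude to, but carried out vertex by vertex; you have the right idea, but the proof is not complete until that iteration is actually written down for each parity case.
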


In concordance with Proposition \ref{prop:k_antidim_toru_cyl_Ham}, we next give the exact values of the $k$-metric antidimension of cylinders, toruses and $2$-dimensional Hamming graphs for those suitable values of $k$, namely those values of $k$ not larger than $\kappa$ (for each corresponding graph). We begin with the cylinder $P_r\Box C_s$.

\begin{thm}
\label{th:k_antidim_cyl}
For every integers $r\ge 2$ and $s\ge 3$,
$$\adim_k(P_r\Box C_s)=\left\{\begin{array}{ll}
                                1, & \mbox{if $k=4$ and $r,s$ are odd}, \\
                                2, & \mbox{if $k=3$ and $s$ is even}, \\
                                +\infty, & \mbox{if $k=3$ and $r,s$ are odd}, \\
                                1, & \mbox{if $k=2$ and $r,s$ are not both even},\\
                                4, & \mbox{if $k=2$ and $r,s$ are even},\\
                                2, & \mbox{if $k=1$ and $s$ is odd}, \\
                                1, & \mbox{if $k=1$ and $s$ is even}.
                              \end{array}\right.
  $$
\end{thm}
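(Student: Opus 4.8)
Throughout I will use the equivalence-class description of a $k$-ARS recalled after Definition~\ref{def:ars_adm}: for a landmark set $S$, two vertices lie in the same class of $\mathcal{Z}_S$ exactly when they have the same vector of distances to $S$, and $S$ is a $k$-ARS with $k=\min\{|Z^i|:Z^i\in\mathcal{Z}_S\}$. The computations are organised around the product-distance identity $d((i,a),(i',a'))=|i-i'|+d_{C_s}(a,a')$ for vertices $(i,a),(i',a')$ of $P_r\Box C_s$, together with the two families of distance-preserving automorphisms of the cylinder: the rotations and reflections of the $C_s$-factor, and (when $r$ is odd) the reflection of the $P_r$-factor about its central vertex. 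For a single landmark the classes of $\mathcal{Z}_S$ are exactly the nonempty distance spheres around it, whose sizes I will read off as the convolution of the path multiplicity profile $m_p(\cdot)$ and the cycle multiplicity profile $m_c(\cdot)$; here $m_c(0)=1$ and $m_c(c)=2$ for $1\le c\le\lfloor (s-1)/2\rfloor$, with the extra antipodal value $m_c(s/2)=1$ when $s$ is even, while $m_p$ depends only on whether the landmark sits at an end or at the centre of $P_r$.

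For the upper bounds I will exhibit an explicit $k$-ARS of the claimed cardinality in each case and verify that its minimum class size equals the target $k$. A single central landmark (available when $r$ is odd) handles $\adim_4=1$ for $r,s$ odd and $\adim_2=1$ for $r$ odd, $s$ even, since the smallest spheres then sit at the two extremes of the distance range and evaluate to $4$ and $2$ respectively. A single corner landmark gives $\adim_2=1$ when $s$ is odd (the farthest sphere has size $2$) and $\adim_1=1$ when $s$ is even (the unique antipodal far corner is a singleton). For $\adim_1=2$ with $s$ odd I will break the cycle-reflection symmetry with a second landmark, placing the two landmarks at opposite path-ends of a common $C_s$-fibre so that some interior vertex of that fibre becomes a singleton class, treating $r=2$ by a separate placement. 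The remaining upper bounds, $\adim_3\le 2$ for $s$ even and $\adim_2\le 4$ for $r,s$ even, will be obtained from a carefully balanced pair, respectively a four-corner set $\{1,r\}\times\{a_0,a_0+s/2\}$, chosen precisely so that the forced farthest-vertex singletons of the individual landmarks are absorbed into $S$ and the smallest surviving class has the target size.

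For the lower bounds I will exploit symmetry to forbid small landmark sets. When $s$ is odd, the $C_s$-reflection fixing a landmark's cycle-coordinate has no antipodal fixed point, so every positive-distance class of a single landmark is a union of reflection pairs and has size at least $2$; hence no single vertex is a $1$-ARS and $\adim_1\ge 2$. When $r,s$ are even, every landmark $v$ has a unique farthest vertex $\phi(v)$ (the antipodal far corner), which is therefore a singleton in $\mathcal{Z}_{\{v\}}$ and remains a singleton under the refinement induced by any superset; consequently a set with minimum class size $2$ must contain $\phi(v)$ for each of its landmarks, and a counting argument on how the extra landmarks re-split the remaining size-$3$ spheres will show that avoiding all singletons cannot be done with fewer than four landmarks, giving $\adim_2\ge 4$. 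The analogous bound showing that single-landmark sphere sizes cannot exceed $2$ when $s$ is even yields $\adim_3\ge 2$, and the non-existence statement $\adim_3=+\infty$ for $r,s$ odd will follow by showing that no distance vector to any $S$ can isolate a class of size exactly $3$, using the reflection pairing to rule out the relevant odd class sizes at the extremes of the distance range.

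The main obstacle is precisely this cluster of exact lower bounds: $\adim_2=4$ for $r,s$ even, $\adim_3=2$ for $s$ even, and the impossibility $\adim_3=+\infty$ for $r,s$ odd. Their difficulty is that they must control the minimum class size over all landmark sets, not merely the symmetric ones, and the key tension is that covering one forced farthest-vertex singleton by enlarging $S$ refines the other spheres and tends to create new singletons. Quantifying this trade-off to pin the threshold exactly at four landmarks, and to certify that a single well-chosen pair realises minimum class size exactly $3$ without accidentally dropping to $1$, is where the careful, parity-dependent case analysis will be concentrated.
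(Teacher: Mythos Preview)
Your overall structure is sound and matches the paper's approach in most respects, but there is a concrete gap in the upper-bound construction for $\adim_2(P_r\Box C_s)\le 4$ when $r,s$ are both even.

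The four-corner set $S=\{1,r\}\times\{a_0,a_0+s/2\}$ that you propose is \emph{not} a $2$-ARS once $r\ge 4$. For any interior path index $1<j<r$, the vertex $(u_j,v_{a_0})$ has distance vector $(j-1,\,j-1+s/2,\,r-j,\,r-j+s/2)$ to the four landmarks. Summing the first and third coordinates (and likewise the second and fourth) gives $r-1$, which forces $d_{C_s}(a_0,\cdot)=0$ for any vertex with the same vector; hence the cycle-coordinate must be $a_0$, and then the path-coordinate is forced to be $j$. Thus $(u_j,v_{a_0})$ is a singleton class and $S$ is only a $1$-ARS. Absorbing the farthest-vertex singletons of the individual landmarks into $S$, as you intend, does not prevent these \emph{new} singletons from appearing on the interior of the two distinguished $P_r$-fibres. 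The paper sidesteps this by \emph{shifting} one diametral pair by a single cycle-step, taking $S_4=\{(u_1,v_0),(u_1,v_1),(u_r,v_{s/2}),(u_r,v_{s/2+1})\}$; this set is not of the product form $\{1,r\}\times\{a,b\}$, breaks the full reflection symmetry that was pinning down those interior vertices, and is genuinely a $2$-ARS.

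A secondary concern is your plan for $\adim_3(P_r\Box C_s)=+\infty$ when $r,s$ are odd. The phrase ``reflection pairing to rule out odd class sizes at the extremes'' suggests an orbit-parity argument, but for an \emph{arbitrary} landmark set $S$ there need be no automorphism fixing $S$ setwise, so reflection orbits do not directly constrain the sizes of the classes of $\mathcal{Z}_S$. The paper proceeds structurally instead: if $S$ contains a non-central vertex $(u_i,v_j)$, its two eccentric vertices (both on a border row) form a class of size at most $2$ in $\mathcal{Z}_S$ unless both are absorbed into $S$; iterating this forces the entire border set $\{u_1,u_r\}\times V(C_s)$ into $S$, which is then only a $1$-ARS. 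You will need a chain argument of this type rather than a symmetry/parity count.
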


The next result presents the $k$-metric antidimension of the torus $C_r\Box C_s$, in concordance with  Proposition \ref{prop:k_antidim_toru_cyl_Ham} (ii).

\begin{thm}
\label{th:k_antidim_toru}
For every two integers $r,s\ge 3$,
$$\adim_k(C_r\Box C_s)=\left\{\begin{array}{ll}
                                1, & \mbox{if $k=4$ and $r,s$ are odd}, \\
                                2, & \mbox{if $k=4$ and $r,s$ are even}, \\
                                4, & \mbox{if $k=3$ and $r,s$ are even}, \\
                                +\infty, & \mbox{if $k=3$ and $r,s$ are odd}, \\
                                1, & \mbox{if $k=2$ and $r,s$ have distinct parity}, \\
                                4, & \mbox{if $k=2$ and $r,s$ are even},\\
                                \min\{r,s\}, & \mbox{if $k=2$ and $r,s$ are odd},\\
                                1, & \mbox{if $k=1$ and $r,s$ are even}, \\
                                2, & \mbox{if $k=1$ and $r,s$ are not both even}.
                              \end{array}\right.
  $$
\end{thm}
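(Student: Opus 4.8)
The plan is to work in coordinates $(i,j)\in\mathbb{Z}_r\times\mathbb{Z}_s$, using the additive distance $d((i,j),(i',j'))=d_r(i,i')+d_s(j,j')$, where $d_r,d_s$ denote the distances in $C_r,C_s$. Since $C_r\Box C_s$ is vertex-transitive, I may translate any chosen vertex of $S$ to the origin, and I will freely use the reflections $i\mapsto -i$, $j\mapsto -j$ and (when $r=s$) the coordinate swap. Throughout I use the reformulation given just before the theorem, namely that $S$ is a $k$-ARS exactly when $k=\min\{|Z|:Z\in\mathcal{Z}_S\}$; the range of relevant $k$ is capped by Proposition~\ref{prop:k_antidim_toru_cyl_Ham}(ii). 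For each row of the table I must therefore either exhibit an $S$ of the stated size whose smallest $\mathcal{R}_S$-class has the prescribed cardinality, or prove that no smaller $S$ can have smallest class of exactly that cardinality.

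First I would treat $S=\{(0,0)\}$. Writing $N_r(a)=\#\{i:d_r(i,0)=a\}$ (so $N_r(0)=1$, $N_r(a)=2$ for $0<a<r/2$, and $N_r(r/2)=1$ when $r$ is even) and likewise $N_s$, the class at distance $t$ from the origin has size $\sum_{a+b=t}N_r(a)N_s(b)$. Reading off the extreme layer $t=\lfloor r/2\rfloor+\lfloor s/2\rfloor$ gives its cardinality: $1$ when $r,s$ are both even (the antipode $(r/2,s/2)$), $2$ under distinct parity, and $4$ when both are odd; a short check shows no smaller class occurs in each regime (for both odd, the reflection $(i,j)\mapsto(-i,-j)$ is fixed-point-free off the origin, so every class is even). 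Hence one vertex already realizes $\adim_1(C_r\Box C_s)=1$ (both even), $\adim_2=1$ (distinct parity), and $\adim_4=1$ (both odd), and simultaneously shows that no single vertex attains the remaining target values.

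For the constructive upper bounds in the other rows I would proceed as follows. For $k=2$ with $r,s$ odd, take $S$ to be a full fiber $\{0\}\times\mathbb{Z}_s$ (or $\mathbb{Z}_r\times\{0\}$): the $C_s$-coordinate is then completely resolved while $d_r(i,0)$ only pairs $i$ with $-i$, so every class off the fiber is exactly $\{(i,j),(-i,j)\}$ of size $2$, giving $\adim_2\le\min\{r,s\}$. For the distinct-parity row $k=1$, and for the both-even rows $k=2,3,4$, I would exhibit the explicit small sets (two vertices for $k=1$ distinct parity and for $k=4$ both even; four vertices for $k=2,3$ both even) and verify directly, via additivity, that the smallest class has the required size. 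A useful bookkeeping device is that adjoining a vertex $(a,b)$ to $S$ refines classes only through the single scalar $d_r(i,a)+d_s(j,b)$, and that for an antipodal pair the two scalars are affinely dependent (e.g.\ $d_r(i,r/2)=r/2-d_r(i,0)$ when $r$ is even); this explains why naive choices fail to refine and pins down which placements actually break the symmetry.

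The genuinely hard part is the set of tight lower bounds, and I expect it to be the \emph{main obstacle}, since each must quantify over all vertex subsets rather than one convenient configuration. Three statements carry the weight. The nonexistence row $\adim_3=+\infty$ (both odd) asks that no $S$ has smallest class of size exactly $3$; I would show a size-$3$ class can never be minimal by decoupling the class condition into the difference-of-distance functions $\phi_t=d_r(i_1,a_t)-d_r(i_2,a_t)$ and $\psi_t=d_s(j_2,b_t)-d_s(j_1,b_t)$, and arguing that forcing exactly three vertices to share a distance vector in this additive odd-cycle setting always drags along either a fourth equidistant vertex or a strictly smaller class. The same decoupling underlies $\adim_2=\min\{r,s\}$ (both odd): against the fiber upper bound I would show that any $S$ with $|S|<\min\{r,s\}$ under-resolves one cyclic coordinate, so its smallest class is $1$ or (by the no-$3$ result) at least $4$, never $2$. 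Finally, the both-even values $\adim_2=\adim_3=4$ require ruling out all $S$ with $|S|\le 3$: here one exploits the half-period translations, which organize $V$ under a $\mathbb{Z}_2\times\mathbb{Z}_2$ symmetry, so that breaking the natural size-$4$ classes down to size $2$ or $3$ without re-creating the antipodal singleton cannot be achieved with fewer than four well-placed vertices.
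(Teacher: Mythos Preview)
Your overall architecture matches the paper's: singleton analysis first, then explicit constructions for the upper bounds, then case-by-case lower bounds. The singleton computation and the fiber construction for the odd--odd $k=2$ upper bound are fine. However, the lower-bound sketches for the hard rows are not merely compressed; at least one contains a genuine logical gap.

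The clearest problem is the odd--odd bound $\adim_2(C_r\Box C_s)\ge\min\{r,s\}$. You write that any $S$ with $|S|<\min\{r,s\}$ ``under-resolves one cyclic coordinate, so its smallest class is $1$ or (by the no-$3$ result) at least $4$, never $2$.'' But the no-$3$ result only says the minimum class size is never exactly~$3$; it does nothing to exclude a minimum class of size~$2$, which is precisely what you must rule out. ``Under-resolving a coordinate'' is also not a statement with content here: you need to exhibit, for every such $S$, an explicit singleton class. The paper does exactly this via a dichotomy: if $S$ contains two vertices in the same row (or column) but is not a full row/column, one locates a vertex on that row whose class is a singleton; if no two vertices of $S$ are aligned, one uses the four eccentric vertices of a chosen $x\in S$ together with a second $y\in S$ to isolate a singleton. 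Either way the conclusion is that every $S$ with $2\le|S|<\min\{r,s\}$ is a $1$-ARS, which together with your singleton analysis gives the bound. Your decoupling functions $\phi_t,\psi_t$ do not obviously produce such a witness.

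Two smaller points. First, the paper's argument for $\adim_3=+\infty$ in the odd--odd case is not a parity/decoupling argument but an absorption one: any $(u_i,v_j)\in S$ has four eccentric vertices that must all lie in $S$ (otherwise they split into classes of size $\le 2$), and iterating forces $S=V$. Your sketched approach may work, but as written it is not an argument. Second, you omit the upper bound $\adim_1\le 2$ in the odd--odd case (your list of promised constructions covers only distinct parity for $k=1$); two adjacent vertices in a common $C_s$-fiber do the job, since their common antipode in that fiber is a singleton class.
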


In the next result we compute the $k$-metric antidimension of the $2$-dimensional Hamming graph for the suitable values of $k$, according to Proposition \ref{prop:k_antidim_toru_cyl_Ham} (iii).

\begin{thm}
\label{th:k_antidim_Hamming}
For every $r\ge 4$,
$$\adim_k(K_r\Box K_r)=\left\{\begin{array}{ll}
                                3, & \mbox{if $k=1$}, \\
                                2, & \mbox{if $k=2$}, \\
                                r-k, & \mbox{if $3\le k\le r-2$}, \\
                                r, & \mbox{if $k=r-1$}, \\
                                +\infty, & \mbox{if $r\le k\le 2r-3$}, \\
                                1, & \mbox{if $k=2r-2$}.
                              \end{array}\right.
  $$
\end{thm}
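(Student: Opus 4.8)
\emph{Setup.} I identify the vertices of $K_r\Box K_r$ with $\{1,\dots,r\}\times\{1,\dots,r\}$; a direct check gives $d\big((i,j),(i',j')\big)=[i\neq i']+[j\neq j']$, so the diameter is $2$ and every point of $S$ lies at distance $1$ or $2$ from any cell outside $S$. Hence for $(i,j)\notin S$ the distances to $S$ are encoded by the set $\tau(i,j)$ of points of $S$ sharing a row or a column with $(i,j)$ (those at distance $1$), and two outside cells lie in the same class of $\mathcal{R}_S$ exactly when $\tau(i,j)=\tau(i',j')$. Writing $R_i$ (resp.\ $C_j$) for the set of points of $S$ in row $i$ (resp.\ column $j$), one has $\tau(i,j)=R_i\cup C_j$, and since $(i,j)\notin S$ this union is disjoint, with $|\tau(i,j)|=\deg(i)+\deg(j)$ where $\deg$ counts points of $S$ on a line. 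Throughout I view $S$ as a set of cells and call a row or column \emph{occupied} when it meets $S$, so the classes are precisely the nonempty level sets of the map $\tau$.

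\emph{Upper bounds.} First I would exhibit one configuration per case. A single vertex splits the outside cells into the $2r-2$ cells of its cross and the $(r-1)^2$ off it, whose minimum is $2r-2$, so $\adim_{2r-2}\le 1$. Two points in general position (distinct rows and columns) produce the class $\{(a_1,b_2),(a_2,b_1)\}$ of size $2$ as the smallest, giving $\adim_2\le 2$. The ``L'' $S=\{(1,1),(1,2),(2,1)\}$ makes $(2,2)$ a singleton class, so $\adim_1\le 3$. For $3\le k\le r-2$, placing $r-k$ points on a single row (distinct columns) yields the three class sizes $k$, $r-1$ and $(r-1)k$, whose minimum is $k$, so $\adim_k\le r-k$. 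Finally a full row has every class of size $r-1$, so it is an $(r-1)$-antiresolving set of size $r$ and $\adim_{r-1}\le r$.

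\emph{Structural lemma and lower bounds.} The heart of the argument is a \emph{corner lemma}: if some cell $(i,j)\notin S$ has both its row and column occupied, then its class has at most two cells. I would prove this by a short degree analysis of $\tau(i,j)=R_i\sqcup C_j$: when $\deg(i)+\deg(j)\ge 3$ the only cell of this type is $(i,j)$ itself (any other candidate would have to realize $\tau$ using a single off-line point and could cover at most two points), while when $\deg(i)=\deg(j)=1$ the class is contained in $\{(i,j),(d,c)\}$, where $(d,c)$ is the opposite corner of the two relevant points. Consequently the minimum class size is at least $3$ \emph{if and only if} $S$ has no such corner, i.e.\ the occupied rows and columns of $S$ are entirely filled: $S$ is a \emph{complete block} with $\rho$ occupied rows and $\gamma$ occupied columns and $m=\rho\gamma$. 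For such a block the classes are transparent: an occupied row against an empty column gives a class of size $r-\gamma$, an occupied column against an empty row gives size $r-\rho$, and the empty--empty cells form a single class of size $(r-\rho)(r-\gamma)$, the only exceptions being the single cell ($\rho=\gamma=1$, minimum $2r-2$) and the full line. Reading off the minimum gives $r-\max(\rho,\gamma)$ when empty rows and columns both exist, and $r-\rho$ (resp.\ $r-\gamma$) for a full-width (resp.\ full-height) block. Since $m\ge\max(\rho,\gamma)$, the equation $r-\max(\rho,\gamma)=k$ forces $m\ge r-k$ for $3\le k\le r-2$ (attained by a single line); the value $r-1$ arises only from a full row or column, forcing $m\ge r$; and $2r-2$ only from $m=1$. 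Combined with the corner lemma, every attainable minimum lies in $\{1,\dots,r-1\}\cup\{2r-2\}$, so no $k$-antiresolving set exists for $r\le k\le 2r-3$, while $k=1,2$ follow from the minima for $|S|\le 2$ (namely $2r-2$ for $|S|=1$ and $2$ or $r-2$ for $|S|=2$), all at least $2$.

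\emph{Main obstacle.} The only genuinely delicate step is the corner lemma and the bookkeeping of its degenerate subcases—coincident points, the opposite corner $(d,c)$ landing in $S$ (as happens for the ``L''), and single-point row/column types merging into one class—after which every remaining case is a one-line computation on complete blocks. Additional care is needed at the boundary blocks, the single cell and the full line, since these are exactly the configurations producing the isolated values $2r-2$ and $r-1$ and thereby pinning down the endpoints of the $+\infty$ gap.
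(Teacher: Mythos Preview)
Your argument is correct and, in my view, cleaner than the paper's. The constructions you give for the upper bounds coincide with those in the paper. For the lower bounds, however, the two approaches diverge. The paper fixes a $k$-ARB $X$ (for $k\ge 3$), assumes it contains two points in general position, and then runs a chain of ``this forces that into $X$'' implications (the opposite corners must lie in $X$, then $X$ sits inside two rows or two columns with a pairing condition, then one half of $X$ is already a $k$-ARS, contradicting minimality), finally concluding that $X$ lies in a single line. Your route is more structural: encoding classes by $\tau(i,j)=R_i\cup C_j$ and proving the corner lemma shows, without any minimality assumption, that \emph{every} $k$-ARS with $k\ge 3$ is a product block $A\times B$; the subsequent one-line computation of the minimum class size for an arbitrary block then yields all the values $r-k$, $r$, $+\infty$ and $2r-2$ simultaneously. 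What your approach buys is a complete description of all $k$-ARSs for $k\ge 3$ (not just the bases) and a uniform treatment of the range $3\le k\le 2r-2$; what the paper's approach buys is that it never needs the bookkeeping of the degenerate block cases you flag (single cell, full line, merging of the row- and column-type classes when $\rho=\gamma=1$), since it lands directly on single-line sets. Your handling of those degeneracies, and of $k\in\{1,2\}$ via the explicit minima for $|S|\le 2$, is correct as stated.
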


\subsection{The $(k,\ell)$-anonymity of cylinders, toruses and $2$-dimensional Hamming graphs}

Theorems \ref{th:k_antidim_cyl}, \ref{th:k_antidim_toru} and \ref{th:k_antidim_Hamming} lead to the following measures for the corresponding graphs studied in each case, under the assumption of the existence of one attacker vertex ($\ell=1$).

\begin{table}[ht]
\newcolumntype{C}{>{\centering\arraybackslash}p{6.5em}}
\centering\small
 \begin{tabular}{|c|c|c|c|c|}
    \hline
    \multicolumn{2}{|c|}{Cylinder graph} & \multicolumn{2}{c|}{Torus graph} & Hamming graph\\
    \multicolumn{2}{|c|}{$P_r\Box P_s$} & \multicolumn{2}{|c|}{$C_r\Box P_s$} & $K_r\Box K_r$ \\ \hline
    $s\ge 4$ even  & $s\ge 3$ odd & $r,s\ge 4$ even & otherwise & $r\ge 4$ \\
    \hline
    $(1,1)$-anonymity  & $(2,1)$-anonymity & $(1,1)$-anonymity & $(2,1)$-anonymity & $(2r-2,1)$-anonymity \\
    \hline
\end{tabular}
  \caption{Anonymity achieved by toruses, cylinders and $2$-dimensional Hamming graphs.}\label{tab:anon-CTH}
\end{table}

In addition to the results in Table \ref{tab:anon-CTH}, we can also note that, for instance, if $r,s$ are both odd, then the torus graph $C_r\Box C_s$ achieves $(4,1)$-anonymity in the presence of only one attacker vertex. Therefore, we can readily observe that $2$-dimensional Hamming graphs are the most ``secure'' networks  with respect to active attacks to their privacy, among those ones we have considered so far. This property is possibly related to the high symmetry of $2$-dimensional Hamming graphs as well to the small diameter (of value only two) of such graphs.
\section{Integer programming formulation for the $k$-MAD problem}\label{sect:k_ILP}

In this section we develop an integer programming formulation for finding the $k$-metric antidimension of a given graph $G$. Abusing slightly the notation, we also denote by $V=\{1, \dots, n\}$ the set of indices associated with the vertices of the graph in a natural way. The formulation, which is based on the definition of a $k$-ARS $S$, through the equivalence relation $\mathcal{R}_S$, is built over two sets of binary decision variables, one to determine the elements of the set $S$ and another one to determine the elements of the different classes.
The vertex classes are determined by subsets of vertices that jointly satisfy some compatibility conditions, and will be referred to as $Q$-\emph{subsets}. To avoid multiple representations of the same solution, each $Q$-subset has a unique representative, which is its lowest index vertex.

We define the following sets of decision variables:
\begin{align}
s_u & = 1 \qquad \text{ if and only if } u \in S, \qquad && \forall u\in V \nonumber \\
q_{uv} & = 1 \qquad\text{ if and only if } v\text{ is in the $Q$-subset with representative $u$} \qquad && \forall u,v\in V, v\geq u.\nonumber
\end{align}

The above decision variables determine vertex sets $S=\{u \in V: s_u=1\}$ of cardinality $|S|=\sum_{u \in V} s_u$, and $Q$-subsets $Q^u=\{v\in V: q_{uv}=1\}$. Since $q_{uu}=1$ indicates that $u$ is the lowest index vertex of a $Q$-subset, the actual number of classes determined by the solution is $\overline r=\sum_{u\in V}q_{uu}$.

The formulation is as follows.

\begin{align}
F\qquad\hspace{-0.3cm}\min \quad &  \sum_{u \in V} s_u  && \\
& \sum_{u\in V} s_{u}  \ge 1 && \label{2ineq:S-non-empty}\\
& s_u+\sum_{v\in V: v\leq u} q_{vu}  = 1 && u\in V\label{2ineq:partition}\\ 
& \sum_{v\in V: v> u} q_{uv}  \ge (k-1) q_{uu} && u \in V \label{2ineq:cardinality}\\ 
& s_u+\,q_{vw}  \leq 1  && u, v, w\in V,\, u\ne v,\, v<w,\, d_{uv}\ne d_{uw} \label{2ineq:incompatibility}\\ 
& s_{v}+q_{uv}+\sum_{\substack{w\in V:w\ne u,\, w\ne v, \\ d_{uw}\ne d_{vw}}}s_w \geq q_{uu} \qquad && u,v\in V, v>u \label{2ineq:maximal}\\ 
& s_u\in\{0, 1\}, \, \forall u\in V; && q_{uv}\in\{0, 1\}, u, v \in V, v\geq u.
\end{align}

Let $\Omega$ denote the domain determined by the feasible solutions to $F$, {\em i.e.} $\Omega=\{(s, q): s\in\{0, 1\}^{n}, q \in\{0, 1\}^{\binom{n}{2}} \text{ satisfy } \eqref{2ineq:S-non-empty}-\eqref{2ineq:maximal}\}$.

\begin{prop}\label{propo1}
Any feasible solution $(s, q)\in\Omega$ determines a $k$-{ARS}.
\end{prop}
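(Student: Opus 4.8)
The plan is to show that the vertex set $S=\{u\in V:s_u=1\}$ encoded by a feasible solution is a $k$-ARS, by proving that the sets $Q^u=\{v:q_{uv}=1\}$, ranging over the representatives $u$ with $q_{uu}=1$, are \emph{exactly} the equivalence classes of $\mathcal{R}_S$ on $V\setminus S$, and that each of them has at least $k$ elements. By the characterization recalled after Definition~\ref{def:k-ell-anonymity}, this is precisely what it means for $S$ to be a $k$-ARS, so the whole argument reduces to reading the five substantive constraints in the right order.

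First I would extract the ``set-up'' facts. Constraint~\eqref{2ineq:S-non-empty} guarantees $S\ne\emptyset$. Constraint~\eqref{2ineq:partition}, read for each $u\in V$, forces exactly one of the events ``$u\in S$'' or ``$u$ lies in a single $Q$-subset $Q^v$ with $v\le u$'' to hold; in particular every representative has $s_u=0$, so $S$ together with $\{Q^u:q_{uu}=1\}$ partitions $V$ and the $Q$-subsets partition $V\setminus S$. Next, constraint~\eqref{2ineq:incompatibility} (soundness) says that one can never have $s_u=1$ and $q_{vw}=1$ with $d_{uv}\ne d_{uw}$; hence every member $w$ of a $Q$-subset $Q^v$ satisfies $d_{uw}=d_{uv}$ for all $u\in S$, i.e. $Q^v$ is contained in a single class of $\mathcal{R}_S$ (the class of its representative $v$). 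Finally, constraint~\eqref{2ineq:cardinality} yields $|Q^u|\ge k$ for every representative $u$.

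The crux is the maximality argument, which uses constraint~\eqref{2ineq:maximal} to show that no class of $\mathcal{R}_S$ is split across two $Q$-subsets. I would argue by contradiction: suppose two distinct representatives $a<b$ lie in the same class, i.e. $d_{aw}=d_{bw}$ for every $w\in S$. Since $b$ is a representative, \eqref{2ineq:partition} gives $s_b=0$, and the compatibility of $a$ and $b$ makes the sum $\sum_{w\ne a,b,\ d_{aw}\ne d_{bw}}s_w$ vanish (any $w\in S$ has $d_{aw}=d_{bw}$, and non-$S$ vertices contribute $s_w=0$). Reading \eqref{2ineq:maximal} with $u=a$, $v=b$ and $q_{aa}=1$ then forces $q_{ab}=1$, i.e. $b\in Q^a$; but applying \eqref{2ineq:partition} to $b$ now gives $s_b+\sum_{v\le b}q_{vb}\ge q_{ab}+q_{bb}=2>1$, a contradiction. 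Hence each class contains a unique representative. Combining this with the partition of $V\setminus S$ and the soundness step, every class is a disjoint union of the $Q$-subsets it contains, each of these contributes its representative to the class, and since only one representative can lie in the class, each class equals a single $Q$-subset. Thus $\mathcal{Z}_S=\{Q^u:q_{uu}=1\}$.

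Putting the pieces together, every class of $\mathcal{R}_S$ is some $Q$-subset and therefore satisfies $|Z^i|\ge k$, so $\min_i|Z^i|\ge k$ while $S\ne\emptyset$; equivalently, each $u\notin S$ admits a set $S_u\subseteq V\setminus(S\cup\{u\})$ with $|S_u|\ge k-1$ sharing its distance vector to $S$, which is exactly the defining property of a $k$-ARS. I expect the maximality step to be the main obstacle, as it is the only point where a single inequality cannot be read off locally: one must combine \eqref{2ineq:maximal} with the global partition structure of \eqref{2ineq:partition} and the transitivity of $\mathcal{R}_S$ to rule out two compatible representatives. Some care is also needed with the indexing conventions, namely that representatives are the lowest-index vertices of their classes and that the variables $q_{uv}$ exist only for $v\ge u$, since these are what make the unique-representative contradiction go through.
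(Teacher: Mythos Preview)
Your proof is correct and follows essentially the same route as the paper's: both read off from constraints \eqref{2ineq:S-non-empty}--\eqref{2ineq:maximal} that $S\ne\emptyset$, that the $Q$-subsets partition $V\setminus S$, that each $Q$-subset sits inside a single $\mathcal{R}_S$-class and has size at least $k$, and that maximality forces every class to coincide with a single $Q$-subset. Your contradiction argument for the maximality step---two representatives $a<b$ in the same class would force $q_{ab}=1$ via \eqref{2ineq:maximal}, contradicting \eqref{2ineq:partition} at $u=b$---is more explicit than the paper's treatment, which simply states that constraint \eqref{2ineq:maximal} prevents a class from being split across different $Q$-subsets.
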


\begin{proof}
Let $(s, q)\in\Omega$. Consider $S=\{u \in V:  s_u=1\}$ and the $Q$-subsets $Q^u=\{v\in V: q_{uv}=1\}$. Let us see that these $Q$-subsets are precisely the classes determined by the equivalence relation $\mathcal R_{S}$. To this end, we analyze the meaning of the constraints:
\begin{itemize}
\item Constraint \eqref{2ineq:S-non-empty} guarantees that the vertex set $S$ is non-empty. 
\item Constraints \eqref{2ineq:partition} ensure that we obtain a partition by imposing that each vertex $u \in V$ belongs either to set $S$ ($s_u=1$) or to some $Q$-subset, where the representative is either vertex $u$ or a vertex with a lower index. 
\item Constraints \eqref{2ineq:cardinality} guarantee that the cardinality of each $Q$-subset is at least $k$. Note that these constraints are only active when $u$ is the representative of some $Q$-subset ($q_{uu}=1$) and impose that, in such a case, the $Q$-subset associated with $u$ has at least $k-1$ additional elements. This guarantees that $|Q^u|\geq k$.
\item Constraints \eqref{2ineq:incompatibility} ensure that the $Q$-subsets are well defined so the pairs of vertices in each component satisfy the \emph{compatibility}  criterion, by imposing that no two vertices $v, w\in V$ with different distance to any vertex $u\in S$ may belong to the same $Q$-subset. When $w=u$ these constraints impose that the $Q$-sets only contain vertices that are not in $S$. 
\item The role of Constraints~\eqref{2ineq:maximal} is to guarantee that the obtained $Q$-subsets are precisely the classes determined by the equivalence relation $\mathcal R_{S}$. In other words, they guarantee that all the vertices in the same equivalence class of $\mathcal R_{S}$ are assigned to the same $Q$-subset. Note that Constraints \eqref{2ineq:incompatibility} do not guarantee this condition since, in principle, two vertices of the same equivalence class of $\mathcal R_{S}$ could be assigned to different $Q$-subsets.
Observe that the constraint \eqref{2ineq:maximal} associated with a given vertex pair $u,v\in V, u< v$ is only active when $u$ is the representative of $Q$-subset $Q^u$. In such a case, the constraint holds trivially if $v$ belongs either to $S$ or to $Q^u$. Otherwise, the constraint imposes that there exists some vertex $w\in S$ such  that $d_{uw}\ne d_{vw}$. That is, $u$ and $v$ do not belong to the same the equivalence of $\mathcal R_{S}$.\\
\end{itemize}
The conclusion of the above analysis is that any feasible solution $(s, q)\in\Omega$ determines a $k$-ARS and its objective function value $|S|=\sum_{u \in V}  s_u$.
\end{proof}

Below we see that the reverse of the above result also holds. In particular

\begin{prop}\label{propo2}
Any $k$-ARS $S\subset V$, $\{Z^r\}_{r\in R}$ can be associated with a solution in $(s, q)\in\Omega$.
\end{prop}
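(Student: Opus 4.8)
The plan is to prove the converse direction of Proposition~\ref{propo1}: starting from an arbitrary $k$-ARS $S\subset V$ with its equivalence classes $\{Z^r\}_{r\in R}$ of $\mathcal{R}_S$, I will construct binary vectors $(s,q)$ and verify that they lie in $\Omega$, i.e.\ satisfy \eqref{2ineq:S-non-empty}--\eqref{2ineq:maximal}. The construction is the natural one dictated by the interpretation of the variables. First I would set $s_u=1$ for all $u\in S$ and $s_u=0$ otherwise. Next, for each equivalence class $Z^r$, let $m_r=\min\{v:v\in Z^r\}$ be its lowest-index vertex, and define $q_{m_r v}=1$ for every $v\in Z^r$, setting all remaining $q_{uv}=0$. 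In other words, each $Q$-subset is declared to be exactly one equivalence class, represented by its minimum element. This guarantees $q_{uu}=1$ precisely when $u$ is the representative of its class.

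The bulk of the work is then a routine verification that this assignment satisfies each constraint, which I would organize constraint by constraint. Constraint~\eqref{2ineq:S-non-empty} holds because $S$, being a $k$-ARS, is nonempty. For \eqref{2ineq:partition}, each $u\in V$ either lies in $S$ (so $s_u=1$ and no $q_{vu}$ with $v\le u$ is set) or lies in exactly one class $Z^r$ with representative $m_r\le u$ (so $s_u=0$ and $q_{m_r u}=1$ is the unique term); since the classes partition $V\setminus S$, exactly one of the two alternatives occurs. For \eqref{2ineq:cardinality}, whenever $q_{uu}=1$ the vertex $u$ represents a class $Z^r$ of size $|Z^r|\ge k$ (because $S$ is a $k$-ARS, so every class has at least $k$ elements), hence the class contributes at least $k-1$ further vertices $v>u$ with $q_{uv}=1$. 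Constraint~\eqref{2ineq:incompatibility} follows from the defining property of $\mathcal{R}_S$: if $v,w$ lie in a common class then $d_{uv}=d_{uw}$ for every $u\in S$, so the forbidden configuration ($s_u=1$, $q_{vw}=1$, $d_{uv}\neq d_{uw}$) never arises; the sub-case $w=u\in S$ is covered because no vertex of $S$ is placed in any $Q$-subset.

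The one constraint that needs genuine care, and which I expect to be the main obstacle, is the maximality constraint~\eqref{2ineq:maximal}. Its purpose is to forbid splitting a single $\mathcal{R}_S$-class across different $Q$-subsets, so I must show that whenever $u$ is a representative ($q_{uu}=1$) and $v>u$ is \emph{not} in $Q^u$ and not in $S$, there exists some $w\in S$ with $d_{uw}\neq d_{vw}$. Here I would argue contrapositively: since $v\notin Q^u$ but both $u,v\notin S$, they lie in distinct equivalence classes of $\mathcal{R}_S$, and by the very definition of $\mathcal{R}_S$ two vertices in distinct classes must be distinguished by some $z\in S$, i.e.\ $d_{uz}\neq d_{vz}$. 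Taking $w=z$ makes the summation term $\sum_{w}s_w$ at least $1$, so the right-hand side $q_{uu}=1$ is met. I would close by noting that the objective value of the constructed solution equals $\sum_{u\in V}s_u=|S|$, so the correspondence between $k$-ARSs and feasible points of $\Omega$ is value-preserving, which together with Proposition~\ref{propo1} establishes that $F$ correctly models the $k$-MAD problem.
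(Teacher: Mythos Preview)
Your proof is correct and follows essentially the same construction and verification as the paper: define $s_u=1$ iff $u\in S$, take each $Q$-subset to be an equivalence class represented by its minimum element, and check the constraints one by one. In fact your argument is slightly more complete than the paper's, since you explicitly verify the maximality constraint~\eqref{2ineq:maximal} (via the observation that distinct classes are distinguished by some $z\in S$), whereas the paper's own proof of this proposition omits that case.
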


\begin{proof}
Let $S\subset V$, $\{Z^r\}_{r\in R}$ be a given $k$-ARS. Consider the following solution $(s, q)$:
  \begin{itemize}\item $s_u=1$ if and only if $u\in S$.
  \item $q_{uv}=1$ if and only if $u,v\in Z^r$, $u<v$, and $ q_{u^r, u^r}=1$ with $u^r=\min\{u: u\in Z^r\}$, $r\in R$. \\
  (that is, $Q^{u^r}=Z^r$, $r\in R$).
\end{itemize}
Let us see that $(s, q)\in \Omega$:
\begin{itemize}
\item $S\ne\emptyset$ implies that there exists $u\in S$, and thus   \eqref{2ineq:S-non-empty} holds.
    \item Since $S\cup\{ Z^r\}_{r\in R}$ determines a partition of $V$, by definition, $(s, q)$ satisfies \eqref{2ineq:partition}. 
\item Since $S\subset V$, $\{Z^r\}_{r\in R}$ is a $k$-ARS, for any class $r\in R$ it holds that for any pair of vertices $v,w\in V$ such that $d_{uv}\ne d_{uw}$ for some $u\in S$, then $v$ and $w$ cannot both belong to the same equivalence class $Z^r= Q^{u^r}$, $r\in R$. In other words, if $s_u=1$, then $q_{v,w}=0$ for all $r\in R$. Hence, $(s, q)$ also satisfies constraints \eqref{2ineq:incompatibility}.
\item Since $|Z^r|\geq k$, for all $r\in R$, $|Z^r\setminus \{u^r\}|=|Q^{u^r}\setminus \{u^r\}|\geq k-1$, for all $r\in R$. Hence the constraints \eqref{2ineq:cardinality}, which are only activated for the vertices $u^r$, $r\in R$,  are satisfied by  $(s, q)$.
\end{itemize}
\end{proof}

\begin{rem}
Note that Constraints \eqref{2ineq:S-non-empty} in formulation $F$ impose that the vertex set  $S$ induced by the solution is non-empty. This means that the optimal value of $F$ will be $\adim_k(G)$, when a $k$-ARS exists. However, when no $k$-ARS exists for a given value of $k$, then the formulation $F$ will have no feasible solution.
\end{rem}

As a consequence of the above analysis, we deduce the following statement.

\begin{cor}
$F$ is a valid formulation for \eqref{$k$-MAD Problem} on a given graph $G=(V, E)$. 
When $F$ is feasible, then its optimal value determines $\adim_k(G)$. Otherwise, if $F$ is infeasible, then no $k$-ARS exists.
\end{cor}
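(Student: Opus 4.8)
The plan is to derive this statement directly from Propositions~\ref{propo1} and~\ref{propo2} together with the preceding Remark, since the corollary merely asserts the validity of the formulation, and all the substantive content has already been established. The core observation I would record first is that the map sending a feasible solution $(s,q)\in\Omega$ to the pair $(S,\{Q^u\})$ described in Proposition~\ref{propo1}, and the map sending a $k$-ARS to the solution $(s,q)$ constructed in Proposition~\ref{propo2}, are mutually inverse and, crucially, \emph{value-preserving}: in both directions the objective $\sum_{u\in V}s_u$ equals the cardinality $|S|$ of the associated $k$-ARS, because $s_u=1$ if and only if $u\in S$ in either construction. Establishing this value-preserving correspondence is what licenses equating the optimal value of $F$ with $\adim_k(G)$.

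For the feasibility case I would argue the two inequalities between optima. Proposition~\ref{propo1} shows that every $(s,q)\in\Omega$ yields a $k$-ARS of cardinality $\sum_{u\in V}s_u$, so the optimal value of $F$ is at least $\adim_k(G)$ (every feasible objective value is the size of some $k$-ARS, hence no smaller than the minimum such size). Conversely, Proposition~\ref{propo2} shows that a $k$-ARB, i.e.\ a smallest $k$-ARS of size $\adim_k(G)$, can be encoded as a feasible solution whose objective equals $\adim_k(G)$, so the optimal value of $F$ is at most $\adim_k(G)$. Combining the two inequalities gives equality with the optimum of \eqref{$k$-MAD Problem}.

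The infeasibility case I would handle by contraposition. If some $k$-ARS existed, then by Proposition~\ref{propo2} it would correspond to a point of $\Omega$, so $F$ would admit a feasible solution; hence, if $F$ is infeasible, no $k$-ARS can exist. (This is exactly the content of the Remark, where the non-emptiness constraint \eqref{2ineq:S-non-empty} rules out the degenerate ``empty'' solution that might otherwise make $F$ spuriously feasible.) I do not anticipate a genuine technical obstacle here, as the two propositions already do the heavy lifting; the only point requiring care is the bookkeeping that the objective value is exactly $|S|$ under both encodings, so that the minima of $F$ and of \eqref{$k$-MAD Problem} coincide rather than merely being finite simultaneously.
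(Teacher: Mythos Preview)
Your proposal is correct and follows essentially the same approach as the paper: invoke Propositions~\ref{propo1} and~\ref{propo2} to obtain a value-preserving correspondence between feasible solutions of $F$ and $k$-ARSs, and conclude that the minimum of $F$ equals $\adim_k(G)$ (with infeasibility corresponding to nonexistence of a $k$-ARS). If anything, your write-up is more explicit than the paper's---spelling out the two inequalities and the contrapositive for the infeasibility case---but the underlying argument is the same.
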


\begin{proof}
As a consequence of Propositions \ref{propo1} and \ref{propo2}, there is a one-to-one correspondence between $k$-ARS and feasible solutions in $\Omega$. Moreover, the  objective function value of feasible solution $(s, q)\in\Omega$, is $|S|=\sum_{u \in V}  s_u$. Therefore, since $F$ is a minimization problem, any optimal solution to $F$ will determine a $k$-ARS and its objective function value $\adim_k(G)$.
\end{proof}

The formulation $F$ above has $n+ {n \choose 2}$ binary decision variables, and a number of constraints $\mathcal{O}\left(1+2n+\left(n+1\right){n \choose 2}\right)$.
In particular, the number of Constraints~\eqref{2ineq:incompatibility} is $(n-2){n \choose 2}$. Since this number can be too big as the number of vertices of the graph increases, we develop an aggregated version of this set of constraints, namely:
\begin{align}
\sum_{u\in V: d_{uv}\ne d_{uw}}s_u+ n\,q_{vw} & \leq n && \forall v,w \in V, v\geq w, \label{2ineq:Classes_agg}
\end{align}
which takes into account that in any feasible solution $S$, for any pair $v,w\in V$, $v< w$,  it holds that $\sum_{u\in V: d_{uv}\ne d_{uw}}s_u\le \sum_{u\in V}s_u\le n$. 
Since the right hand side of Constraints \eqref{2ineq:Classes_agg} is precisely $n$, the constraint associated with a given pair of vertices $v, w$ prevents that both vertices belong to the same $Q$-set when there exists some vertex $u\in S$ such that $d_{uv}\ne d_{uw}$. 
Hence by substituting Constraints \eqref{2ineq:incompatibility} with Constraints \eqref{2ineq:Classes_agg} we obtain an alternative valid formulation for \eqref{$k$-MAD Problem}, which will be referred to as $F^{A}$.
Note that the number of Constraints~\eqref{2ineq:Classes_agg} is ${n^2 \choose 2}$, which is one order of magnitude less than that of  Constraints \eqref{2ineq:incompatibility}.

\section{Computational results}\label{sect:compu}

In order to analyze the empirical performance of formulation $F^{A}$ we have carried out a series of computational experiments. The objective of these experiments is twofold. On the one hand, to analyze the effectiveness and scalability of $F^{A}$ for different classes of graphs, and on the other hand, to serve as an empirical support for the classes of graphs for which theoretical results are not known. 

All the computational tests have been carried out in an AMD Ryzen 7 PRO 2700U 2.20 GHz with 8 GB RAM, under Windows 10 Pro as operating system. 
Formulation $F^{A}$  has  been coded in Mosel 5.6.0 using as solver Xpress Optimizer Version~38.01.01 \cite{Xpress}.

For the experiments we have considered the following sets of benchmark instances:
\begin{itemize}
\item $\mathcal{PC}$ instances: Cylinders $P_r\Box C_s$ for combinations \\ $(r, s)\in\{(5, 5),(5, 6),(5, 9),(5, 10),(6, 6),(8, 10),(9, 9),(10, 10)\}$.
\item $\mathcal{CC}$ instances: Torus $C_r\Box C_s$ for combinations \\ $(r, s)\in\{(5, 5),(5, 6),(5, 9),(5, 10),(6, 6),(8, 10),(9, 9),(10, 10)\}$.
\item $\mathcal{T}$ instances: Trees with a number of vertices $n\in\{50, 100, 200\}$, and maximum vertex degree $\delta$, where $\delta\in\{$5, 6, 7, 8, 10, 11, 15, 20$\}$ for $n\in\{50$, 200\} and $\delta\in\{5, 10, 15, 20, 25\}$ for $n=100$. Originally, the trees are generated as directed and rooted at vertex 1.
Then all directions are removed to obtain the resulting undirected tree.
The arcs of the original rooted tree are generated by iteratively \emph{exploring} its vertices and randomly generating up to $\delta$ descendants, among the vertices not yet explored. For each combination of $n$ and $\delta$ two instances have been generated.
\item $\mathcal{S}$ instances: General sparse graphs with a number of vertices $n\in\{50, 100, 200\}$ and vertex degrees $\delta\in\{6, 11\}$.
Similarly to the case of the trees, originally directed graphs are generated and then all directions removed.
The arcs of the graph are generated by iteratively \emph{exploring} its vertices and randomly generating up to $\delta$ end-nodes (from the original vertex set) for the arcs with origin at the current vertex. For each combination of $n$ and $\delta$ two instances have been generated.
\item $\mathcal{D}$ instances: General dense graphs with a number of vertices $n\in\{50, 100, 200\}$, vertex degrees $\delta\in\{40, 45\}$ for $n=50$, $\delta\in\{90, 95\}$ for $n=100$ and $\delta\in\{180, 190\}$ for $n=200$. For each combination of $n$ and $\delta$ two instances have been generated by removing $\delta$ randomly generated edges from the complete graph $K_n$.
\end{itemize}
Cylinder and torus instances have been solved for values of $k\in\{1, 2, 3, 4\}$, whereas instances in the other classes has been solved for values of $k\in\{1, 2, 3, 4, 5, 6\}$. A computing time limit of 7200 seconds has been set for each solved instance.\\

Table \ref{Tab:0} shows the number of variables and constraints in formulation $F^A$  for the considered benchmark instances.

 \begin{table}[ht]
\centering
\small
\begin{center}
\begin{tabular}{|c|cc|cc|c||c|c|cc|c|}												
\hline
\multirow{2}{*}{} & \multirow{2}{*}{$r$} & \multirow{2}{*}{$s$} & \multicolumn{2}{c|}{Vars.}  & \multirow{2}{*}{Constr.}  & \multirow{2}{*}{} & \multirow{2}{*}{$n$}& \multicolumn{2}{c|}{Vars.}  & \multirow{2}{*}{Constr.}\\
\cline{4-5}	\cline{9-10}
 &&&	\multicolumn{1}{c}{$s$} & \multicolumn{1}{c|}{$q$} & &   &&	 \multicolumn{1}{c}{$s$} & \multicolumn{1}{c|}{$q$} & \\	
 \hline																
\multirow{8}{*}{$\mathcal{PC}$ \& $\mathcal{CC}$}&		\multirow{5}{*}{5}	&	5	&	25	&	300	&	1251	&			\multirow{3}{*}{$\mathcal{T}, \mathcal{S}$, \& $\mathcal{D}$}	&	 50	&	 50	&	1225	&	5001\\
	&	  	&	 6	&	 30	&	 435	&	  1801	&		&		100	&	100	&	4950	&	20101\\
	&	  	&	 9	&	 45	&	 990	&	 4051	&		&		200	&	200	&	19900	&	80001\\
\cline{7-11}	&	  	&	10	&	 50	&	1225	&	 5001	&\multicolumn{1}{c}{}\\									
	&	 6	&	 6	&	 36	&	 630	&	 2593	&\multicolumn{1}{c}{}\\								
	&	 8	&	10	&	 80	&	3160	&	 12801	&\multicolumn{1}{c}{}\\									
	&	 9	&	 9	&	 81	&	3240	&	 13123	&\multicolumn{1}{c}{}\\									
	&	10	&	10	&	100	&	4950	&	20001	&\multicolumn{1}{c}{}\\									
\cline{1-6}
\end{tabular}
\caption{Instances dimensions}\label{Tab:0}
\end{center}
\end{table}

Tables \ref{Tab:1}-\ref{Tab:5} summarize the obtained results for each of the classes above. 
All tables show the values of the instance parameters in the first columns, which are followed by $k$ blocks, with two columns each, corresponding to the considered values of $k$. 
The first column in each block, labeled with $|S|$ gives the optimal values for the $k$-metric antidimension of the instances; an entry \emph{NF} (Not Feasible)  
in this column indicates that no $k$-ARS exists for the tested value of $k$; that is, $\adim_k(G)=+\infty$. 
For the classes where two instances have been generated for each combination of parameters values (trees and sparse and dense graphs), the entries in columns $|S|$ show only one value when the optimal value coincided for both instances, whereas  the two (different) optimal values are indicated otherwise. 
The second column in each block, labeled with $\emph{CPU}$ gives the computing time required by the solver to obtain a provable optimal solution, or to show that no feasible solution exists. In Tables \ref{Tab:3}-\ref{Tab:5} the results in these columns are the averages over the two instances with the same characteristics. 
For classes $\mathcal{S}$ \& $\mathcal{D}$ some of the largest instances with $n=200$ could not be solved to proven optimality within the computing time limit. 
For these instances (identified with $TL$ in column \emph{CPU}) the value of the best found solution is presented in the tables.

\begin{table}[ht]
\centering
\small
\begin{center}
\begin{tabular}{|cc|cr|cr|cr|cr|}												
\hline
\multirow{2}{*}{$r$} & \multirow{2}{*}{$s$} & \multicolumn{2}{c|}{$k=1$}  & \multicolumn{2}{c|}{$k=2$}  & \multicolumn{2}{c|}{$k=3$} & \multicolumn{2}{c|}{$k=4$}\\
\cline{3-10}												
 &&	\multicolumn{1}{c}{$|S|$} & \multicolumn{1}{r|}{CPU} &	\multicolumn{1}{c}{$|S|$} & \multicolumn{1}{r|}{CPU}&	\multicolumn{1}{c}{$|S|$} &	 \multicolumn{1}{r|}{CPU}&	 \multicolumn{1}{c}{$|S|$} &	 \multicolumn{1}{r|}{CPU}\\	
 \hline
\multirow{4}{*}{5}	&	 5	&	2	&	0.9	&	1	&	0.2	&	NF	&	0.1	&	 1	&	0.2	\\
                  	&	 6	&	1	&	0.3	&	1	&	0.3	&	 2	&	0.2	&	NF	&	0.2	\\
                  	&	 9	&	2	&	8.4	&	1	&	1.8	&	NF	&	1.6	&	 1	&	1.8	\\
                  	&	10	&	1	&	2.4	&	1	&	2.3	&	 2	&	2.6	&	NF	&	2.8	\\
\hline
               6	&	 6	&	1	&	  0.6	&	4	&	   3.7	&	 2	&	  0.5	&	NF	&	  0.4	\\
               8	&	10	&	1	&	 20.9	&	4	&	 423.9	&	 2	&	 87.7	&	NF	&	223.8	\\
               9	&	 9	&	2	&	146.0	&	1	&	  23.1	&	NF	&	239.8	&	 1	&	 34.6	\\
              10	&	10	&	1	&	 68.5	&	4	&	1748.1	&	 2	&	217.3	&	NF	&	771.5	\\
\hline
\end{tabular}
\caption{Summary of results for $\mathcal{PC}$ instances (cylinders $P_r\Box C_s$)}\label{Tab:1}
\end{center}
\end{table}

\begin{table}[ht]
\centering
\small
\begin{center}
\begin{tabular}{|cc|cr|cr|cr|cr|}												
\hline
\multirow{2}{*}{$r$} & \multirow{2}{*}{$s$} & \multicolumn{2}{c|}{$k=1$} & \multicolumn{2}{c|}{$k=2$} & \multicolumn{2}{c|}{$k=3$}  & \multicolumn{2}{c|}{$k=4$}\\
\cline{3-10}												
 &&	\multicolumn{1}{|c}{$|S|$} & \multicolumn{1}{r|}{CPU} &	\multicolumn{1}{c}{$|S|$} & \multicolumn{1}{r|}{CPU}&	\multicolumn{1}{c}{$|S|$} &	 \multicolumn{1}{r|}{CPU}&	 \multicolumn{1}{c}{$|S|$} &	 \multicolumn{1}{r|}{CPU}\\	
 \hline
\multirow{4}{*}{5}	&	 5	&	2	&	 2.5	&	5	&	 5.1	&	NF	&	 4.1	&	 1	&	0.2	\\
                  	&	 6	&	2	&	 2.1	&	1	&	 0.3	&	NF	&	 0.2	&	NF	&	0.2	\\
                  	&	 9	&	2	&	18.6	&	5	&	45.1	&	NF	&	46.1	&	 1	&	2.4	\\
                  	&	10	&	2	&	10.5	&	1	&	 2.8	&	NF	&	 2.3	&	NF	&	2.8	\\
\hline
 6	&	 6	&	1	&	  0.6	&	4	&	   2.4	&	 4	&	   0.5	&	2	&	 0.7	\\
 8	&	10	&	1	&	 20.9	&	4	&	 742.9	&	 4	&	 102.4	&	2	&	98.4	\\
 9	&	 9	&	2	&	174.9	&	9	&	 933.6	&	NF	&	 371.3	&	1	&	36.2	\\
10	&	10	&	1	&	 66.7	&	4	&	3667.6	&	 4	&	4760.6	&	2	&	131.6	\\
\hline
\end{tabular}
\caption{Summary of results for $\mathcal{CC}$ instances (torus $C_r\Box C_s$)}\label{Tab:2}
\end{center}
\end{table}

As it could be expected, the results in Tables \ref{Tab:1}-\ref{Tab:2} confirm the theoretical results proved in Theorems \ref{th:k_antidim_cyl} and \ref{th:k_antidim_toru}, and the validity of formulation $F^{A}$. In general, for fixed parameters values, cylinder instances $\mathcal{PC}$  can be solved in smaller computing times than torus instances $\mathcal{CC}$, although differences are rather small and there are a few exceptions. 
Indeed the dimensions of the instances affect computing times, which notably increase with the values $(r, s)$. 
For a fixed dimension, the value of parameter $k$ does not seem to  noticeable affect computing times. On the contrary, instances with higher optimal values (higher values of $|S|$) as well as infeasible instances seem to be computationally more demanding.

\addtolength{\tabcolsep}{-2pt}
\begin{table}[ht]
\centering
\scriptsize
\begin{center}
\begin{tabular}{|cc|cr|cr|cr|cr|cr|cr|}												
\hline
\multirow{2}{*}{$n$} & \multirow{2}{*}{$\delta$} & \multicolumn{2}{c|}{$k=1$}  & \multicolumn{2}{c|}{$k=2$}  & \multicolumn{2}{c|}{$k=3$}  & \multicolumn{2}{c|}{$k=4$} & \multicolumn{2}{c|}{$k=5$} & \multicolumn{2}{c|}{$k=6$}\\
\cline{3-14}												
 &&	\multicolumn{1}{c}{$|S|$} & \multicolumn{1}{r|}{CPU} &	\multicolumn{1}{c}{$|S|$} & \multicolumn{1}{r|}{CPU}&	 \multicolumn{1}{c}{$|S|$} &	 \multicolumn{1}{r|}{CPU}&	 \multicolumn{1}{c}{$|S|$} &	 \multicolumn{1}{r|}{CPU} &	 \multicolumn{1}{c}{S} &	 \multicolumn{1}{r|}{CPU} &	 \multicolumn{1}{c}{S} &	 \multicolumn{1}{r|}{CPU}\\	
 \hline
\hline
\multirow{8}{*}{50}	&	 6	&	1	&	1.9	&	1   	&	  2.9	&	1, 8	&	  2.3	&	1, 17	&	2.6	&	 1, 29	&	4.1	&	NF        &	1.9     \\								
                   	&	 7	&	1	&	2.2	&	1   	&	  3.7	&	1, 8	&	  4.8	&	1, 11	&	3.6	&	 1, 15	&	6.6	&	1, 44     &	3.3     \\
                   	&	 8	&	1	&	2.3	&	1   	&	  6.8	&	1   	&	  3.8	&	1    	&	3.6	&	 1    	&	5.8	&	31, 44    &	4.3     \\
                   	&	 9	&	1	&	1.7	&	1   	&	  1.8	&	1   	&	  2.4	&	1, 6 	&	4.3	&	 1, 15	&	4.0	&	4, 16     &	7.0     \\
                   	&	10	&	1	&	1.6	&	1   	&	  2.1	&	1, 3	&	  2.8	&	1, 7 	&	4.0	&	 1, 6 	&	4.5	&	3, 19     &	2.8     \\
                   	&	11	&	1	&	1.8	&	1   	&	  1.7	&	1   	&	  2.6	&	1, 8 	&	4.1	&	 1, 12	&	4.1	&	34, 11    &	3.8     \\
                   	&	16	&	1	&	1.6	&	1   	&	  3.7	&	1, 5	&	  5.6	&	1, 4 	&	2.9	&	 3, 3 	&	5.4	&	1, 2      &	3.8     \\
                   	&	21	&	1	&	1.3	&	1, 5	&	  4.1	&	1, 4	&	  7.4	&	3, 4 	&	6.5	&	 2, 3 	&	4.8	&	1         &	3.2     \\
\hline
\multirow{5}{*}{100}&	 6	&	1	&	56.7	&	1   	&	 76.1	&	1   	&	107.1	&	1   	&	98.0	& 	1    	&	132.0	&	NF   	&	187.3  \\
                   	&	11	&	1	&	38.4	&	1   	&	 41.7	&	1   	&	 51.2	&	1   	&	72.4	& 	1, 22	&	108.4	&	3, 28	&	116.1   \\
                   	&	15	&	1	&	39.2	&	1   	&	 65.0	&	1   	&	 71.8	&	1, 2	&	78.5	& 	1    	&	122.3	&	1    	&	106.8   \\
                   	&	21	&	1	&	35.5	&	1, 3	&	100.7	&	1, 2	&	 93.7	&	1, 3	&	66.6	& 	1    	&	 77.7	&	1    	&	 89.2   \\
                   	&	26	&	1	&	24.5	&	2, 6	&	124.9	&	1, 5	&	 89.0	&	1, 4	&	69.5	& 	1, 3 	&	 54.8	&	1, 2 	&	 43.6   \\
\hline
\multirow{9}{*}{200}&	 6	&	1	&	2553.0	&	1   	&	3105.3	&	1   	&	3321.2	&	1   	&	4244.3	&	1    	&	4266.2		&	1    	&	3009.4     \\
                   	&	 7	&	1	&	2504.5	&	1   	&	2996.5	&	1   	&	3387.3	&	1   	&	3871.3	&	1    	&	4388.2		&	1, 3 	&	4498.7     \\
                   	&	 8	&	1	&	2186.9	&	1   	&	2332.7	&	1   	&	3641.4	&	1   	&	3173.5	&	1, 19	&	4972.6		&	1, 44	&	4208.0     \\
                   	&	 9	&	1	&	2117.2	&	1   	&	2254.3	&	1   	&	3029.4	&	1   	&	3089.9	&	1    	&	4402.9		&	1    	&	4112.6     \\
                   	&	10	&	1	&	2149.2	&	1   	&	2644.2	&	1   	&	2572.8	&	1   	&	3251.9	&	1    	&	3418.3		&	1    	&	3596.4     \\
                   	&	11	&	1	&	2240.4	&	1   	&	2828.9	&	1   	&	2429.5	&	1   	&	4479.9	&	1    	&	3774.2		&	1, 2 	&	3873.7     \\
                   	&	16	&	1	&	1290.5	&	2, 3	&	3366.2	&	1, 2	&	2052.0	&	1   	&	1935.2	&	1    	&	1716.2		&	1    	&	2334.9     \\
                   	&	21	&	1	&	1679.0	&	1, 3	&	3080.6	&	1, 2	&	2960.4	&	1, 2	&	2857.9	&	1    	&	2845.5		&	1    	&	2426.5     \\
                   	&	26	&	1	&	1124.2	&	1, 2	&	2305.0	&	1   	&	1840.1	&	3   	&	3270.9	&	2    	&	1720.9		&	1    	&	2199.7     \\
\hline
\end{tabular}
\caption{Summary of results for $\mathcal{T}$ instances: trees with $n$ vertices and vertex degree $\delta$.}\label{Tab:3}
\end{center}
\end{table}
\addtolength{\tabcolsep}{2pt}

The results of Table \ref{Tab:3} indicate that all but four $\mathcal{T}$ instances (trees) were feasible. For $k=1$ the optimal value was always $|S|=1$, which was also the optimal value for most instances with $k>1$. There are indeed exceptions, particularly for small instances as the value of $k$ increases and gets close to $\delta$. Moreover, for $\delta=k=6$, some $\mathcal{T}$ instances were proved to be infeasible, namely the two 50-vertex instances as well as the two 100-vertex instances. Similarly to $\mathcal{PC}$ and $\mathcal{CC}$ instances, the computing times increase with the number of vertices in the graph, but do not seem to be particularly affected by the value of the parameter $k$. Taking into account the number of variables and constraints involved in the formulation, instances were solved in reasonable computing times. In particular, all instances with $n\in\{50, 100\}$ were solved in less than one hour of computing time. Still, the 200 vertex instances where computationally more demanding, particularly for larger values of $k$, although all of them could be solved within the maximum time limit of two hours.\\

As can be seen in Table \ref{Tab:4}, all $\mathcal{S}$ instances (general sparse graphs) with $n\leq 100$ were feasible, and most of them had an optimal value $|S|=1$; when $|S|>1$, the optimal value of such  instances was $|S|=2$, even when the degree of the vertices is close to the value of $k$. This seems to indicate that, for sparse graphs, allowing cycles reduces the influence of parameter $k$, and facilitates the existence of feasible solutions, at least when $n\leq 100$. However, for five instances with $n=200$ no feasible solution was found. In four of these cases the time limit was reached, so it is not known whether no feasible solution exists for these instances, although this seems unlikely since most of the times a feasible solution (possibly not an optimal one) is found early in the optimization process. For the fifth instance (with parameters $\delta=25$ and $k=6$) the optimization process terminated with $S=\emptyset$ thus proving that no feasible solution exists for that instance.
In general, computing times are higher to those of $\mathcal{T}$ instances with the same parameters values, and the increase becomes more evident as the number of vertices raises. In particular, nine instances with $n=200$ could not be solved to proven optimality within the limit of two hours. While a feasible solution was found within the time limit for five of these instances, no feasible solution was obtained for the other four instances within the allowed computing time.
Table \ref{Tab:5} shows a similar behavior for $\mathcal{D}$ instances (general dense graphs). All but two instances were feasible, and the time limit was reached in the two cases where no feasible solution could be found. It can now be observed that, even if in some cases $|S|=1$, most instances had an optimal value of $|S|=2$, and in some cases $|S|=3$. 
The computing times required to solve $\mathcal{D}$  instances are in the same range as those of $\mathcal{S}$. 
This can be explained by the fact that the number of variables and constraints of formulation $F^A$ depend on the number of vertices of the graph, but do not depend on the number of edges.

\addtolength{\tabcolsep}{-2pt}
\begin{table}[ht]
\centering
\scriptsize
\begin{center}
\begin{tabular}{|cc|cr|cr|cr|cr|cr|cr|}												
\hline
\multirow{2}{*}{$n$} & \multirow{2}{*}{$\delta$} & \multicolumn{2}{c|}{$k=1$}  & \multicolumn{2}{c|}{$k=2$}  & \multicolumn{2}{c|}{$k=3$}  & \multicolumn{2}{c|}{$k=4$} & \multicolumn{2}{c|}{$k=5$} & \multicolumn{2}{c|}{$k=6$}\\
\cline{3-14}												
 &&	\multicolumn{1}{c}{$|S|$} & \multicolumn{1}{r|}{CPU} &	\multicolumn{1}{c}{$|S|$} & \multicolumn{1}{r|}{CPU}&	 \multicolumn{1}{c}{$|S|$} &	 \multicolumn{1}{r|}{CPU}&	 \multicolumn{1}{c}{$|S|$} &	 \multicolumn{1}{r|}{CPU} &	 \multicolumn{1}{c}{S} &	 \multicolumn{1}{r|}{CPU} &	 \multicolumn{1}{c}{S} &	 \multicolumn{1}{r|}{CPU}\\	
 \hline
\multirow{2}{*}{50} &	 6	&	1   	&	  10.5	&	1	&	   1.6	&	1   	&	   6.6	&	1   	&	   1.9	&	1   	&	   1.5	&	1	&	   0.9	 \\
                    &	11	&	1   	&	  14.0	&	1	&	   4.6	&	1   	&	   0.9	&	1   	&	   4.6	&	1   	&	   1.1	&	1	&	   1.0  \\
\hline
\multirow{2}{*}{100}&	 6	&	1   	&	 130.60	&	1	&	  75.17	&	1   	&	 116.3	&	1, 2	&	 230.4	&	1   	&	  51.6	&	1	&	 136.9	 \\
                    &	11	&	2   	&	 202.3	&	2	&	 212.27	&	2   	&	 197.9	&	2   	&	 171.3	&	2   	&	 177.4	&	2	&	 640.8  \\
                    &	16	&	1, 2	&	 125.4	&	1	&	  90.0	&	1   	&	  75.7	&	1   	&	  76.7	&	1   	&	  21.0	&	1	&	  55.8  \\
                    &	21	&	1   	&	 119.7	&	1	&	  43.40	&	1, 2	&	 105.7	&	2   	&	 104.5	&	1, 2	&	  73.3	&	2	&	  74.9  \\
\hline
\multirow{5}{*}{200}	&	 6	&	1, 2	&	3075.6, TL	&	2	&	6481.5    	&	2	&	TL, 5482.6	&	2	&	TL, 6816.6	&	1    	&	1931.7     &	1    	&	3222.6	\\
                    	&	11	&	1   	&	2283.3    	&	1	&	2511.5    	&	1	&	1636.4    	&	1	&	    4519.4	&	1, NF	&	2490.3, TL	&	NF   	&	TL	\\
                    	&	16	&	2   	&	4151.5    	&	2	&	TL, 4658.7	&	2	&	4549.2    	&	2	&	    3520.1	&	2    	&	3958.5     &	2    	&	3793.7	\\
                    	&	21	&	2   	&	3489.5    	&	2	&	2864.1    	&	2	&	3286.7    	&	2	&	    2394.2	&	2    	&	2530.     &	1, NF	&	TL	\\
                    	&	26	&	1   	&	3817.8    	&	1	&	1293.2    	&	1	&	1403.3    	&	1	&	     871.9	&	1    	&	 884.5     &	NF, 1	&	891.1	\\
\hline
\end{tabular}
\caption{Summary of results for $\mathcal{S}$ instances: sparse graphs with $n$ vertices and vertex degree $\delta$.}\label{Tab:4}
\end{center}
\end{table}
\addtolength{\tabcolsep}{2pt}

\addtolength{\tabcolsep}{-2pt}
\begin{table}[ht]
\centering
\scriptsize
\begin{center}
\begin{tabular}{|cc|cr|cr|cr|cr|cr|cr|}												
\hline
\multirow{2}{*}{$n$} & \multirow{2}{*}{$\delta$} & \multicolumn{2}{c|}{$k=1$}  & \multicolumn{2}{c|}{$k=2$}  & \multicolumn{2}{c|}{$k=3$}  & \multicolumn{2}{c|}{$k=4$} & \multicolumn{2}{c|}{$k=5$} & \multicolumn{2}{c|}{$k=6$}\\
\cline{3-14}												
 &&	\multicolumn{1}{c}{$|S|$} & \multicolumn{1}{r|}{CPU} &	\multicolumn{1}{c}{$|S|$} & \multicolumn{1}{r|}{CPU}&	 \multicolumn{1}{c}{$|S|$} &	 \multicolumn{1}{r|}{CPU}&	 \multicolumn{1}{c}{$|S|$} &	 \multicolumn{1}{r|}{CPU} &	 \multicolumn{1}{c}{S} &	 \multicolumn{1}{r|}{CPU} &	 \multicolumn{1}{c}{S} &	 \multicolumn{1}{r|}{CPU}\\	
\hline
\multirow{2}{*}{50} &	 25	&	1	&	   4.1  	&	1   	&	   5.3  	&	1	&	   2.4  	&	1   	&	   1.4  	&	1	&	   1.1  	&	1	&	   2.7   \\
                    &	 30	&	1	&	  26.9  	&	2   	&	  10.3  	&	2	&	  13.4  	&	2   	&	  10.0  	&	2	&	  11.4  	&	2	&	  12.2   \\
                    &	 40	&	3	&	  71.2  	&	2, 3	&	  74.2  	&	2	&	  25.4  	&	2   	&	  26.4  	&	2	&	  16.8  	&	2	&	  20.3   \\
                    &	 45	&	2	&	  22.8  	&	2   	&	  17.8  	&	2	&	  18.7  	&	2   	&	  20.3  	&	2	&	  15.2  	&	2	&	  17.5   \\
\hline
\multirow{2}{*}{100}&	 75	&	2	&	  17.6  	&	2   	&	  12.1  	&	2	&	  14.4  	&	1   	&	   3.6  	&	1	&	   0.9  	&	1	&	   2.9   \\
                    &	 80	&	2	&	2461.8  	&	2, 3	&	1072.3  	&	2	&	1198.8  	&	2, 3	&	 679.6  	&	2	&	 406.6  	&	2	&	 615.2   \\
                    &	 85	&	2	&	 657.4  	&	2   	&	 478.0  	&	2	&	 375.0  	&	2   	&	 249.8  	&	2	&	 219.1  	&	2	&	 202.7   \\
                    &	 90	&	2	&	 320.0  	&	2   	&	 205.4  	&	2	&	 189.0  	&	2   	&	 174.5  	&	2	&	 205.9  	&	2	&	 142.9   \\
                    &	 95	&	2	&	 121.8  	&	2   	&	 205.7  	&	2	&	 156.6  	&	2   	&	 130.5  	&	2	&	 180.4  	&	2	&	 168.9   \\
\hline
\multirow{5}{*}{200}&	175	&	2, 3	&	TL 	&	2	&	4790.1, TL &	2	&	TL, 3715.2 &	2	&	TL, 2961.7	&	2	&	2280.1    	&	 2	&	2088.5  \\
                    &	180	&	2   	&	3553.4	&	2	&	1202.9    	&	2	&	1217.9    	&	2	&	TL, 1095.5	&	2	&	TL, 1239.8	&	 2	&	2065.7	\\
                    &	185	&	2   	&	3553.4	&	2	&	1202.9	   	&	2	&	1217.9	   	&	2	&	TL, 1095.5	&	2	&	TL, 1239.8	&	 2	&	2065.7	\\
                    &	190	&	2   	&	1574.8	&	2	&	1598.3	   	&	2	&	2222.7	   	&	2	&	TL, 1719.1	&	2	&	    2125.3	&	NF	&	TL, TL 	\\
                    &	195	&	2   	&	1201.7	&	2	&	1216.2	   	&	2	&	2759.8	   	&	1	&	    2275.7	&	1	&	    1281.2	&	 1	&	1389.5	\\
\hline
\end{tabular}
\caption{Summary of results for $\mathcal{D}$ instances: dense graphs with $n$ vertices and degree $\delta$.}\label{Tab:5}
\end{center}
\end{table}
\addtolength{\tabcolsep}{2pt}

On the other hand, in order to further analyze \eqref{$k$-ARS Problem}, and the influence of the parameter $k$ on the feasibility and optimal values of instances, we run a final series of experiments in which we solved instances for increasing values of $k$ starting with $k=1$. We now used the $\mathcal{T}$, $\mathcal{S}$ and $\mathcal{D}$ instances with $n=100$. Each of these instances was solved for varying values of the parameter $k$, $1\leq k\leq \min\{\delta, \lfloor\frac{n}{2}\rfloor\}$.  

Table \ref{Tab:barrido arbol} summarizes the obtained results on tree instances $\mathcal{T}$, whereas Tables \ref{Tab:ultima1}-\ref{Tab:ultima2_2} summarize the results on instances $\mathcal{S}$ and $\mathcal{D}$.

\begin{table}[ht]
\centering
\scriptsize
\begin{center}
\begin{tabular}{|c|c|c|c|c|c|}												
\hline
\multirow{2}{*}{$k$} & \multicolumn{1}{c|}{$\delta=6$} & \multicolumn{1}{c|}{$\delta=11$} & \multicolumn{1}{c|}{$\delta=16$} & \multicolumn{1}{c|}{$\delta=21$} & \multicolumn{1}{c|}{$\delta=26$}\\
& \multicolumn{1}{c|}{$|S|$} & \multicolumn{1}{c|}{$|S|$} & \multicolumn{1}{c|}{$|S|$}& \multicolumn{1}{c|}{$|S|$}& \multicolumn{1}{c|}{$|S|$}\\
\hline
 1 &\multirow{5}{*}{$1$}   & \multirow{4}{*}{1}   & \multirow{3}{*}{1}  &       1   &    1   \\
 2 &                       &                      &                     &   3, 1    &   2, 6  \\
 3 &                       &                      &                     &   2, 1    &   1, 5  \\
\cline{4-4}
 4 &                       &                      &  2, 1               & 1, 3      &   1, 4  \\
\cline{3-4}
 5 &                       & 1, 22                & \multirow{4}{*}{1}  &  1        &   1, 3  \\
\cline{2-2}
 6 &                       & 3, 28                &                     &  1        &   2, 1  \\
 7 &                       & 1, 27                &                     &  2, 37    &    1    \\
 8 &                       & 1, 35                &                     &  1, 26    &   1, 19  \\
\cline{4-4}
 9 &                       & 72, 34               & 4, 6                &  1, 36    &  3, 18   \\
10 &                       & 90, 80               & 3, 5                &  59, 35   & 2, 17\\
11 &                       &    NF                & 2, 4                &  72, 34   & 1, 16  \\
 \cline{3-6}
12 &                       &                      &      1              & 71, 45    &  1, 15 \\
13 &                       &                      & 3, 50               & 70, 57    & 63, 27 \\
14 &                       &                      & 2, NF               & 69, 71    & 62, 40 \\
15 &                       &                      & 1, NF               & 85, 70    & 61, 60 \\
 \cline{4-6}
16 &                       &                      &                     & 84, NF    &  60, 59   \\
17 &                       &                      &                     & 83, NF    &  59, 58   \\
18 &                       &                      &                     & NF        &  82, 57   \\
19 &                       &                      &                     & NF        &  81,56     \\
 \cline{5-6}
20 &                       &                      &                     &           &  80, 55    \\
21 &                       &                      &                     &           &  79, 54    \\
22 &                       &                      &                     &           &  78           \\
23 &                       &                      &                     &           &  77           \\
24 &                       &                      &                     &           &  76           \\
25 &                       &                      &                     &           &  NF, 75        \\
\hline
\end{tabular}
\caption{Optimal values for $\mathcal{T}$ instances for varying values of $\delta$ and $k$.}\label{Tab:barrido arbol}
\end{center}
\end{table}

\begin{table}[ht]
\centering
\small
\begin{center}
\begin{tabular}{|cc|cc|cc|cc|cc|cc|}												
\hline
\multicolumn{4}{|c|}{$\delta=11$} & \multicolumn{4}{c|}{$\delta=22$} & \multicolumn{4}{c|}{$\delta=26$}\\
\hline
\multicolumn{2}{|c|}{\text{Instance 1}} & \multicolumn{2}{c|}{\text{Instance 2}} & \multicolumn{2}{c|}{\text{Instance 1}} & \multicolumn{2}{c|}{\text{Instance 2}} & \multicolumn{2}{c|}{\text{Instance 1}} & \multicolumn{2}{c|}{\text{Instance 2}} \\
\multicolumn{1}{|c}{$k$} & \multicolumn{1}{c|}{$|S|$} & \multicolumn{1}{c}{$k$} & \multicolumn{1}{c|}{$|S|$} &\multicolumn{1}{c}{$k$} & \multicolumn{1}{c|}{$|S|$}&\multicolumn{1}{c}{$k$} & \multicolumn{1}{c|}{$|S|$}&\multicolumn{1}{c}{$k$} & \multicolumn{1}{c|}{$|S|$}&\multicolumn{1}{c}{$k$} & \multicolumn{1}{c|}{$|S|$}\\
\hline
 [1, 9] & 2 &	[1, 9]	& 2	&	[1, 9]  	&	 2	&	  [1, 3]	&	 1	&	[1, 2]   &   1 & 	[1, 12]  & 	2	\\
  10    & 1	&	10    	& 1	&	[10, 13]	&	 1	&	       4	&	 2	&	[3, 12]  & 	 2 & 	13       &	1	\\
  $>$10   & NF&	$>$10   	&NF	&	14      	&	NF	&	       5	&	 1	&	[13, 15] & 	 1 & 	[14, 24] &	NF	\\
        &   &	      	&  	&	15      	&	 1	&	  [6, 9]	&	 2	&	16       &  NF & 	[25, 26] &	1	\\
        &   &	      	&  	&	[16, 17]	&	NF	&	[10, 14]	&	 1	&	[17, 21] & 	 1 & 	$>$26      &	NF	\\
        &   &	      	&  	&	18      	&	 1	&	      15	&	NF	&	22       &  NF & 	       &       \\
        &   &	      	&  	&	19      	&	 2	&	      16	&	 1	&	23       & 	 1 & 	       &       \\
        &   &	      	&  	&	20      	&	 1	&	[17, 18]	&	NF	&	24       & 	 2 & 	       &       \\
        &   &	      	&  	&	$>$20     	&	NF	&	      19	&	 2	&	[25, 26] & 	 1 & 	       &       \\
        &   &	      	&  	&	        	&	  	&	      20	&	 1	&	$>$26      &  NF & 	       &         \\
        &   &	      	&  	&	        	&	  	&	     $>$20	&	NF	&	         &     & 	      	&      	\\
 \hline
 \end{tabular}
\caption{Optimal values for $\mathcal{S}$ instances for varying values of $\delta$ and $k$.}\label{Tab:ultima1}
\end{center}
\end{table}

\begin{table}[ht]
\centering
\scriptsize
\begin{center}
\begin{tabular}{|cc|cc|cc|cc|cc|cc|}												
\hline
\multicolumn{4}{|c|}{$\delta=75$} & \multicolumn{4}{c|}{$\delta=80$} & \multicolumn{4}{c|}{$\delta=85$}\\
\hline
\multicolumn{2}{|c|}{\text{Instance 1}} & \multicolumn{2}{c|}{\text{Instance 2}} & \multicolumn{2}{c|}{\text{Instance 1}} & \multicolumn{2}{c|}{\text{Instance 2}} & \multicolumn{2}{c|}{\text{Instance 1}} & \multicolumn{2}{c|}{\text{Instance 2}}\\
\multicolumn{1}{|c}{$k$} & \multicolumn{1}{c|}{$|S|$} & \multicolumn{1}{c}{$k$} & \multicolumn{1}{c|}{$|S|$} &\multicolumn{1}{c}{$k$} & \multicolumn{1}{c|}{$|S|$}&\multicolumn{1}{c}{$k$} & \multicolumn{1}{c|}{$|S|$}&\multicolumn{1}{c}{$k$} & \multicolumn{1}{c|}{$|S|$}&\multicolumn{1}{c}{$k$} & \multicolumn{1}{c|}{$|S|$} \\
												
\hline
      1   &  3  &   [1, 2]	&  3 	&  [1, 18] &  2	&  [1, 19] 	&   2	&  [1, 13] 	&  2 	&  [1, 13] 	&  2 	 \\
   $[2, 3]$ &  2  &  {\tiny[3, 23]}	&  2 	& [19, 21] &  1	& [20, 22] 	&   1	& [14, 32] 	&  1 	& [14, 15] 	&  1   	\\
      4   &  3  &       24	&  1 	&       22 & NF	&       23 	&  NF	&      $>$32 	& NF 	&       16 	&  2 		\\
 $[5, 24]$  &  2  &       25	& NF 	& [23, 27] &  1	& [24, 33] 	&   1	&          	&    	& [17, 33] 	&  1 	\\
$[25, 30]$  &  1  & [26, 28]	&  1 	&       28 & NF	&      $>$33 	&  NF	&&&&\\
      31  & NF  &       29	& NF 	& [29, 33] &  1	&          	&    	&          	&    	&          	&    	\\
$[32, 33]$  &  1  &       30	&  1 	&      $>$33 & NF	&          	&    	&          	&    	&          	&    	\\
$[34, 47]$  &  2  & [31, 32]	& NF 	&          &   	&          	&    	&          	&    	&          	&    	\\
$[48, 50]$  & NF  &       33	&  1 	&          &   	&          	&    	&          	&    	&          	&    	\\
          &     & [34, 45]	&  2 	&          &   	&          	&    	&          	&    	&          	&    	\\
          &     &      $>$45	& NF 	&          &   	&          	&    	&          	&    	&          	&    	\\
 \hline
 \end{tabular}
\caption{Optimal values for $\mathcal{D}$ instances for $\delta\in\{75, 80, 85\}$ and varying values of $k$.}\label{Tab:ultima2}
\end{center}
\end{table}

\begin{table}[ht]
\centering
\scriptsize
\begin{center}
\begin{tabular}{|cc|cc|cc|cc|}												
\hline
\multicolumn{4}{|c|}{$\delta=90$}& \multicolumn{4}{c|}{$\delta=95$}\\
\hline
\multicolumn{2}{|c|}{\text{Instance 1}} & \multicolumn{2}{c|}{\text{Instance 2}} & \multicolumn{2}{c|}{\text{Instance 1}} & \multicolumn{2}{c|}{\text{Instance 2}} \\
\multicolumn{1}{|c}{$k$} & \multicolumn{1}{c|}{$|S|$} & \multicolumn{1}{c}{$k$} & \multicolumn{1}{c|}{$|S|$}& \multicolumn{1}{c}{$k$} & \multicolumn{1}{c|}{$|S|$}& \multicolumn{1}{c}{$k$} & \multicolumn{1}{c|}{$|S|$}\\							
\hline
 [1, 8]	&  2&[1, 8] &   2       &   [1, 3]	    &  2 	& [1, 3]&  2   	 \\
$[9, 22]$	&  1&[9, 23]&   1       &  [4, 11]	    &  1 	&[4, 11]&  1   	\\
$>$22	& NF&  $>$23&  NF       &       12	    & NF 	& $>$11	& NF   	\\
        &   &       &        	&       13	    &  1 	&       &      	\\
  	    &   &       &   		&    [14, 24]	& NF 	&       &      	\\
        & 	&       &   		&       25   	&  4 	&       &      	 \\
    	&	&       &          	&      $>$25	& NF 	&       &      	\\
 \hline
 \end{tabular}
\caption{Optimal values for $\mathcal{D}$ instances $\delta\in\{90, 95\}$ and varying values of $k$.}\label{Tab:ultima2_2}
\end{center}
\end{table}

The results of the implementations appearing in Table \ref{Tab:barrido arbol} show that the existence of $k$-ARSs in a tree is always possible, for every $k$ between $1$ and the maximum value that gives a feasible solution. This confirms the theoretical results obtained in \cite{DasGupta-2019} about the existence of $k$-ARSs in trees. On the other hand, the experiments on general graphs, both sparse and dense, shown in Tables  \ref{Tab:ultima1} and \ref{Tab:ultima2} confirm that, when the graphs are not trees, the existence of $k$-ARSs is not always warranted.  Note that for several values of $k$ smaller than the maximum possible $k$, no feasible solution exits.

\subsection{The $(k, \ell)$-anonymity met by the randomly generated graphs}

The computational results presented in the tables of this section allow to conclude that, in general, graphs randomly generated are usually satisfying a low security with respect to active attacks to its privacy, under the assumption of the existence of one or two attacker vertices ($\ell = 1$ or $\ell = 2$). In particular, we conclude the following.
\begin{itemize}
  \item Trees belonging to $\mathcal{T}$ instances are satisfying only $(1, 1)$-anonymity.
  \item General sparse graphs belonging to $\mathcal{S}$ instances sometimes satisfy $(1, 1)$-anonymity (usually when the graphs have smaller order), and sometimes $(2, 1)$-anonymity (more frequently when the graphs are of larger order).
  \item For the case of general dense graphs belonging to $\mathcal{D}$ instances, there are very few cases such that there is a value of $k$ for which $\adim_k(G)\le 1$. This could indicate that such graphs achieve a higher security with respect to active attacks. For such dense graphs, we observe that they usually satisfy $(1, 2)$-anonymity, i.e., an attacker needs to control at least two vertices in such graphs to have some success.
\end{itemize}

The results of the last item are consistent with the characteristics of dense graphs, which are likely to have a small diameter as well as nearly-symmetrical structures, similarly to $2$-dimensional Hamming graphs, which have been proved to have the highest security properties with respect to their privacy.

\section{Concluding remarks}\label{sect:conclu}

In this research we have considered the problem of evaluating the $(k, \ell)$-anonymity measure for several networks of two different types. 
A first type consists of graphs with a predetermined structure, namely cylinders, toruses, and $2$-dimensional Hamming graphs, and a second one is formed by randomly generated graphs. 
Both types have in common the property of being regular or almost regular graphs. 
In order to proceed with this evaluation, we have studied the $k$-antiresolving sets and the $k$-metric antidimension of the considered networks. The later is a combinatorial parameter, of interest on its own in graph theory.

The main contributions of this research are the following:
\begin{enumerate}
\item Concerning cylinders, toruses and $2$-dimensional Hamming graphs: 
\begin{itemize}
  \item We have found the largest possible value of $k$ for which a $k$-ARS exists.
  \item We have computed the exact value for the $k$-metric antidimension for the values of $k$ such that a $k$-ARS exists,  and have proved when such $k$-ARS does not exist.
  \item We have stated their $(k,\ell)$-anonymity according to the privacy measure under study.
  \end{itemize}
    As a conclusion, we have obtained that cylinders and toruses have a low privacy, in contrast to $2$-dimensional Hamming graphs, which show a higher privacy. This seems to be mainly due to the highly symmetrical structure of such graphs that are also of diameter two.
  \item We have developed an integer programming formulation for finding the $k$-metric antidimension of a given graph $G$. The formulation can be used as a tool to deal with randomly generated graphs as well as with classes of graphs for which their $k$-metric antidimension is not known theoretically. 
  \begin{itemize}
  \item The formulation has been validated by implementing it for the cylinder and torus graphs, previously studied in a theoretical way.
  \item Using the formulation we have computed the $k$-metric antidimension for randomly generated trees and randomly generated general graphs, both sparse and dense. For all generated instances we have solved both \eqref{$k$-MAD Problem} and \eqref{$k$-ARS Problem}.
  \item Using the results of the implementations we have evaluated the $(k,\ell)$-anonymity met by the tested graphs. 
   \end{itemize}
   The obtained results indicate that random trees and general sparse graphs achieve low privacy properties, whereas random general dense graphs exhibit higher privacy properties. Similartly to $2$-dimensional Hamming graphs, this can be due to the fact that dense graphs have more ``near-symmetrical'' properties than the other ones, and also that they intuitively should have small diameters.
\end{enumerate}
As a general conclusion concerning the $(k, \ell)$-anonymity, it seems that higher privacy properties appear in graphs with more symmetry, larger degrees and smaller diameters.\\

Possible avenues for future research are the following:
\begin{itemize}
  \item A general study, from a graph theory point of view, of the $k$-metric antidimension of Cartesian product graphs as well as other related products. This would extend our contributions on cylinders, toruses and $2$-dimensional Hamming graphs.
   \item A deeper study of the relationship between high privacy indicators and symmetry properties of graphs with larger degrees and smaller diameters (with respect to their orders). 
\item Study of the complexity of computing the $k$-metric antidimension of trees. That is, given an arbitrary integer $k$ and a tree $T$, is it polynomial to find $\adim_k(T)$? Notice that the existence of such value is already warranted (see \cite{DasGupta-2019}).
  \item Apply the implementations of the integer programming formulation to real social networks, in order to evaluate their actual privacy features.
\end{itemize}

\section*{Acknowledgements}

The authors have been partially supported by the Spanish Agencia Estatal de Investigación and European Regional Development Funds (ERDF) through MINECO PID project MTM2019-105824GB-I00, as well as by the Plan Propio - UCA 2022-2023. Moreover, this investigation was completed while the second author (Dorota Kuziak) was making a temporary stay at the Rovira i Virgili University supported by the program ``Ayudas para la recualificaci\'on del sistema universitario espa\~{n}ol para 2021-2023, en el marco del Real Decreto 289/2021, de 20 de abril de 2021''.

\section*{Author contributions statement}

All authors contributed equally to this work.

\section*{Additional information}

The authors have no competing interests.

\bibliographystyle{abbrv}

\section*{Appendix}

We next include the proofs of the results presented in Section \ref{sec:Cyl-To-Ham}. The proofs follow somehow a similar structure. We first give conditions that a given set of vertices must satisfy in order to be a $k$-ARS for a given graph. Then, we complete the computations, by constructing a $k$-ARS with the required cardinality that gives the exact value. In the process we make use of the following and simple result.

\begin{rem}{\em\cite{Trujillo-Rasua-2016}}
\label{rem_Delta}
If $G$ is a graph of maximum degree $\Delta$, then it is $k$-metric antidimensional for some $k\le \Delta$.
\end{rem}

\subsection*{Proof of Proposition \ref{prop:k_antidim_toru_cyl_Ham}}

\paragraph{(i)} Consider a cylinder $P_r\Box C_s$ with $r\ge 2$ and $s\ge 3$. Since $P_r\Box C_s$ has maximum degree $4$ (unless $r=2$ when it is $3$-regular), from Remark \ref{rem_Delta}, it must be $k$-metric antidimensional for some $k\le 4$ (or $k\le 3$ when $r=2$). We analyze the following situations.

\begin{itemize}
\item \underline{$r,s$ are odd.}
Notice that the vertex in $S_1=\{(u_{(r+1)/2},v_0)\}$ has four eccentric vertices, which are $(u_1,v_{(s-1)/2})$, $(u_1,v_{(s+1)/2})$, $(u_r,v_{(s-1)/2})$, $(u_r,v_{(s+1)/2})$, and they form an equivalence class in $\mathcal{Z}_{S_1}$.
Notice that the vertex set of $P_r\Box C_s$ can be partitioned into sets $S_1,X_1,\dots, X_{q}$ where $X_i$, $i\in\{1,\dots,q\}$, is formed by those vertices at distance $i$ from the vertex in $S_1$, and $q$ is the eccentricity of $(u_{(r+1)/2},v_0)$.
Moreover, given any vertex, there are at least four disjoint paths, excluding the first vertex, between the vertex $(u_{(r+1)/2},v_0)$ and the vertices $(u_1,v_{(s-1)/2})$, $(u_1,v_{(s+1)/2})$, $(u_r,v_{(s-1)/2})$, and $(u_r,v_{(s+1)/2})$, such that one of those path passes thought such given vertex.
Based on these facts above, it can be observed that any equivalence class in $\mathcal{Z}_{S_1}$ has cardinality at least four (these classes are precisely the sets $X_1,\dots, X_{q}$).
Thus, $S_1$ is a $4$-ARS.
Since $P_r\Box C_s$ has maximum degree four, we deduce that $P_r\Box C_s$ is $4$-metric antidimensional when $r,s$ are odd.

We remark that this last discussion about the size of the classes defined by a central vertex can be extended to any grid or torus.
This fact will be used several times in the following proofs.

\item \underline{$s$ is even.}
If $r=2$, then $P_r\Box C_s$ has maximum degree three. Hence, an argument similar to the one seen above leads to the conclusion that the set $S_2=\{(u_1,v_0),(u_2,v_{s/2})\}$ is a $3$-ARS.
Thus, $P_2\Box C_s$ is $3$-metric antidimensional in this case. From now on, let $r\ge 3$, and let $S_3=\{(u_1,v_0),(u_r,v_{s/2})\}$. It can be observed that $S_3$ is a $3$-ARS, which means $P_r\Box C_s$ is $k$-metric antidimensional for some $k\ge 3$.

Now, suppose that $P_r\Box C_s$ contains a $4$-ARS (it cannot be more because $P_r\Box C_s$ has maximum degree four). We consider two cases.

\medskip
\noindent
\underline{\textbf{Case 1:} $r$ is even.} First observe that any vertex $x = (u_i, v_j)$ of $P_r\Box C_s$ has a unique eccentric vertex.
Thus, if one looks for a $4$-ARS $A$ and $x\in A$, then its unique eccentric vertex must be in $A$ as well.
The eccentric vertex of $(u_i,v_j)$ is either $(u_1,v_{j+s/2})$ or $(u_r,v_{j+s/2})$, say $(u_r,v_{j+s/2})$ (which can be assumed by the symmetry of $P_r\Box C_s$).
Thus, $(u_r,v_{j+s/2})\in A$ too.
But then, since the vertex $(u_r,v_{j+s/2})$ has degree three, we obtain that either the set $A$ cannot be a $4$-ARS, contradicting our assumption, or all the neighbors of $(u_r,v_{j+s/2})$ are in $A$.
This latter situation leads to the conclusion that all the vertices of the copy of $C_s$ corresponding to $u_r$ must be in $A$ as well, which is again a contradiction since such $A$ is not a $4$-ARS.

\medskip
\noindent
\underline{\textbf{Case 2:} $r$ is odd.}
The proof is relatively similar to Case 1, but we must remark that any vertex $x$ of $P_r\Box C_s$ has a unique eccentric vertex, unless $x$ is equal to $(u_{(r+1)/2},v_j)$ for any $j\in \{0,\dots,s-1\}$.
If the vertex $(u_i,v_j)\in A$ considered above satisfies that $i\ne (r+1)/2$, then the argument of Case 1 works in the same way to get the same conclusion.
So, assume $i=(r+1)/2$.
Hence, there are two eccentric vertices of $(u_i,v_j)$, which are $(u_1,v_{j+s/2})$ and $(u_r,v_{j+s/2})$.
Thus, it must happen $(u_1,v_{j+s/2}),(u_r,v_{j+s/2})\in A$ too. But then, since the vertices $(u_r,v_{j+s/2}),(u_1,v_{j+s/2})$ have degree three, either the set $A$ cannot be a $4$-ARS, contradicting the assumption again, or as in Case 1, all the vertices of the copies of $C_s$ corresponding to $u_1,u_r$ must in $A$ as well, which is again a contradiction since such $A$ is not a $4$-ARS.

\medskip
Therefore, both cases above lead to conclude that $P_r\Box C_s$ is $3$-metric antidimensional when $r$ is even.

\item \underline{$s$ is odd and $r$ is even.}
First observe that any single vertex of $P_r\Box C_s$ has two eccentric vertices, and it forms a $2$-ARS.
Thus, $P_r\Box C_s$ is $k$-metric antidimensional for some $k\ge 2$ in this case. Suppose that $P_r\Box C_s$ contains a $k$-ARS for some $k\ge 3$.

Let $S_4$ be $k$-ARS with $k\ge 3$ and let $(u_i,v_j)\in S_4$.
By the reasons stated above, the two eccentric vertices of $(u_i,v_j)$ must be in $S_4$ too.
By the symmetry of $P_r\Box C_s$, we can assume without loss of generality that these vertices are $(u_r,v_{j+(s-1)/2})$ and $(u_r,v_{j+(s+1)/2})$ (this is also based on the fact that $r$ is even).
This immediately means that $S_4$ cannot be a $4$-ARS because the vertices $(u_r,v_{j+(s-1)/2})$ and $(u_r,v_{j+(s+1)/2})$ have degree three, and a similar procedure as in the Cases 1 and 2 above can be applied.
On the other hand, since $(u_r,v_{j+(s-1)/2})$ and $(u_r,v_{j+(s+1)/2})$ are in $S_4$, it must happen that $(u_1,v_{j-1})$, $(u_1,v_{j})$ and $(u_1,v_{j+1})$ are also in $S_4$.
By the same reasons, it must also happen that $(u_r,v_{j+(s-1)/2-1})$ and $(u_r,v_{j+(s+1)/2+1})$ are in $S_4$ as well.
This argument will lead to the conclusion that all the vertices $(u_1,v_\ell)$ and $(u_r,v_\ell)$, with $\ell\in \{0,s-1\}$, must be in $S_4$.
However, such a set is only a $1$-ARS, which is a contradiction.
Consequently, $P_r\Box C_s$ is $2$-metric antidimensional when $r$ is even and $s$ is odd.
\end{itemize}

\paragraph{(ii)} Consider a torus $C_r\Box C_s$ with $r,s\ge 3$. Since $C_r\Box C_s$ is $4$-regular, from Remark \ref{rem_Delta}, it must be $k$-metric antidimensional for some $k\le 4$.

\begin{itemize}
\item If $r,s$ are both even, then consider the two diametral vertices $u_0$ and $u_{r/2}$ of $C_r$, and the two diametral vertices $v_0$ and $v_{s/2}$ of $C_s$.
Notice that $\{u_0,u_{r/2}\}$ forms a $2$-ARS of $C_r$, as well as $\{v_0,v_{s/2}\}$ in $C_s$.
We can readily see that the two vertices $(u_0,v_0)$ and $(u_{r/2},v_{s/2})$ form a $4$-ARS of $C_r\Box C_s$. \item If $r,s$ are both odd, then the vertices $u_0$ and $v_0$ form $2$-ARSs of $C_r$ and $C_s$, respectively. Thus, it can be noticed that the vertex $(u_0,v_0)$ forms a $4$-ARS of $C_r\Box C_s$.

\item It remains to consider the case in which $r,s$ are of different parity. Assume that $r$ is even and $s$ is odd. Thus, for any vertex $(u_i,v_j)\in V(C_r\Box C_s)$, there are exactly two diametral vertices, and so such vertex forms only a $2$-ARS of $C_r\Box C_s$.  Consequently, if one wants to construct a $k$-ARS $S$ of $C_r\Box C_s$ with $k\ge 3$, then we need to consider a set of cardinality larger than one. However, in such situation there will always be an equivalence class $Z^i\in \mathcal{Z}_S$ with cardinality at most $2$, since for any vertex $(u_i,v_j)\in V(C_r\Box C_s)$, there are exactly two diametral vertices in $C_r\Box C_s$. This proves that $C_r\Box C_s$ is $2$-metric antidimensional.
\end{itemize}

\paragraph{(iii)} Consider a $2$-dimensional Hamming graph $K_r\Box K_r$ with $k\geq 4$. 
Notice that $K_r\Box K_r$ is $(2r-2)$-regular. 
Thus, from Remark~\ref{rem_Delta}, it is $k$-metric antidimensional for some $k\le 2r-2$, and it is not difficult to check that any vertex of $K_r\Box K_r$ forms a $(2r-2)$-ARS.
\qed

\subsection*{Proof of Theorem \ref{th:k_antidim_cyl}}
Let us see that for every integers $r\ge 2$ and $s\ge 3$,
$$\adim_k(P_r\Box C_s)=\left\{\begin{array}{ll}
                                1, & \mbox{if $k=4$ and $r,s$ are odd}, \\
                                2, & \mbox{if $k=3$ and $s$ is even}, \\
                                +\infty, & \mbox{if $k=3$ and $r,s$ are odd}, \\
                                1, & \mbox{if $k=2$ and $r,s$ are not both even},\\
                                4, & \mbox{if $k=2$ and $r,s$ are even},\\
                                2, & \mbox{if $k=1$ and $s$ is odd}, \\
                                1, & \mbox{if $k=1$ and $s$ is even}.
                              \end{array}\right.
  $$
We separate our arguments in four cases according to the possible values of $k$ for which we can compute $\adim_k(P_r\Box C_s)$.

\medskip
\noindent
\textbf{\underline{Case $k=4$:}} We only need to consider the situation when $r,s$ are both odd. Since any single vertex $(u_{(r+1)/2},v_j)$ with $j\in\{0,\dots,s-1\}$ forms a $4$-ARS, it is clear that $\adim_4(P_r\Box C_s)=1$.

\medskip
\noindent
\textbf{\underline{Case $k=3$:}} If $s$ is even, then for any vertex $(u_i,v_j)$ of $P_r\Box C_s$, there is exactly one eccentric vertex, unless $r$ is odd and $i=(r+1)/2$, in which case $(u_i,v_j)$ has exactly two eccentric vertices.
Thus, any single vertex forms a $k$-ARS of $P_r\Box C_s$ with $k\le 2$, and so $\adim_3(P_r\Box C_s)\ge 2$.
On the other hand, we consider the set $S_1=\{(u_0,v_1),(u_r,v_{s/2})\}$ (two diametral vertices).
Notice that in $\mathcal{Z}_{S_1}$ there are two classes of cardinality $3$, one formed by the three neighbors of $(u_0,v_1)$ and the other one by the three neighbors of $(u_r,v_{s/2})\}$.
Now, the structure of $P_r\Box C_s$ allows to observe that every vertex of $P_r\Box C_s$ belongs to a diametral path between $(u_0,v_1)$ and $(u_r,v_{s/2})$, and so, vertices having the same distance to $(u_0,v_1)$ also have the same distance to $(u_r,v_{s/2})$.
This means that any other remaining class (if it exists) of $\mathcal{Z}_{S_1}$ has cardinality at least $4$.
Thus, $S_1$ is a $3$-ARS of $P_r\Box C_s$, so $\adim_3(P_r\Box C_s)=2$.

Next, let $r,s$ be odd. As mentioned before, any single vertex $(u_{(r+1)/2},v_j)$ with $j\in\{0,\dots,s-1\}$ forms a $4$-ARS, and any other different vertex forms a $2$-ARS. Thus, if there would be a $3$-ARS, such a set should have cardinality at least $2$. Suppose $S_2$ is such a set. If $S_2\subseteq (\{u_{(r+1)/2}\}\times V(C_s))$, then we readily observe that $S_2$ is not a $3$-ARS. Thus, consider a vertex $(u_i,v_j)\in S_2$ with $j\ne (r+1)/2$. Since $(u_i,v_j)$ has two eccentric vertices which are, without loss of generality, $(u_r,v_{j+(s-1)/2})$ and $(u_r,v_{j+(s+1)/2})$. By using the same argument as in the proof of Proposition \ref{prop:k_antidim_toru_cyl_Ham} (i), we deduce that all the vertices $(u_1,v_\ell)$ and $(u_r,v_\ell)$, with $\ell\in \{0,s-1\}$, must be in $S_2$. However, such set is only a $1$-ARS, which is a contradiction. Therefore, there are not $3$-ARSs in $P_r\Box C_s$ when $r,s$ are odd.

\medskip
\noindent
\textbf{\underline{Case $k=2$:}} If $r,s$ are even, then every vertex has exactly one eccentric vertex, and it forms only a $1$-ARS. Thus, $\adim_2(P_r\Box C_s)\ge 2$. Let $S_3$ be a $2$-ARB.
Notice also that if a vertex $(u_i,v_j)\in S_3$, then its eccentric vertex must be in $S_2$ as well. This means that if $\adim_2(P_r\Box C_s)=2$, then $S_3$ is formed by two diametral vertices of $P_r\Box C_s$. However, as proved for the case $k=3$, such set is a $3$-ARSs in $P_r\Box C_s$, which is not possible. Thus $\adim_2(P_r\Box C_s)\ge 3$. Suppose that $|S_3|=\adim_2(P_r\Box C_s)=3$. By the same reasons (the uniqueness of the eccentric vertex), $S_3$ cannot be a subset of only $\{u_1,u_r\}\times V(C_s)$. Let $(u_i,v_j)\in S_3$ be such that $i\ne 1,r$. Hence, its eccentric vertex, without loss of generality say $(u_r,v_{j+s/2})$, belongs to $S_3$ too. Moreover, also the eccentric vertex of $(u_r,v_{j+s/2})$, which is $(u_1,v_{j})$, is in $S_3$, namely, $S_3=\{(u_i,v_j),(u_r,v_{j+s/2}),(u_1,v_{j})\}$. However, in this case, the vertex $(u_1,v_{j+s/2)}$ forms a class of cardinality $1$ in $\mathcal{Z}_{S_3}$, which is not possible. Consequently, $\adim_2(P_r\Box C_s)\ge 4$. To prove the equality, we consider the set $S_4=\{(u_1,v_0),(u_1,v_1),(u_r,v_{s/2}),(u_r,v_{s/2+1})\}$. Notice that there are four equivalence classes of cardinality $2$ in $\mathcal{Z}_{S_4}$ which are the respective neighbors of $(u_1,v_0)$, $(u_1,v_1)$, $(u_r,v_{s/2})$, $(u_r,v_{s/2+1})$ not in $S_4$. Also, any other equivalence class of $\mathcal{Z}_{S_4}$ has cardinality at least $2$. Therefore, $S_4$ is a $2$-ARS, which concludes the proof of this case. 

Assume next that $r,s$ are not both even. If $s$ is odd, then we readily notice that the vertex $(u_1,v_0)$ forms a $2$-ARS, and so $\adim_2(P_r\Box C_s)=1$. Also, if $s$ is even, then $r$ must be odd. Hence, we again easily see that the vertex $(u_{(r+1)/2},v_0)$ forms a $2$-ARS, and thus $\adim_2(P_r\Box C_s)=1$.

\medskip
\noindent
\textbf{\underline{Case $k=1$:}} If $s$ is even, then the vertex $(u_1,v_0)$ forms a $1$-ARS, and so $\adim_1(P_r\Box C_s)=1$. On the other hand, if $s$ is odd, then any single vertex of $P_r\Box C_s$ is either a $2$-ARS or a $4$-ARS. Thus, $\adim_1(P_r\Box C_s)\ge 2$. To complete the equality, we just observe that the set formed by the two adjacent vertices $(u_1,v_0)$ and $(u_1,v_1)$ is a $1$-ARS, and we deduce that $\adim_1(P_r\Box C_s)=2$.

\subsection*{Proof of Theorem \ref{th:k_antidim_toru}}
Let us see that for every two integers $r,s\ge 3$,
$$\adim_k(C_r\Box C_s)=\left\{\begin{array}{ll}
                                1, & \mbox{if $k=4$ and $r,s$ are odd}, \\
                                2, & \mbox{if $k=4$ and $r,s$ are even}, \\
                                4, & \mbox{if $k=3$ and $r,s$ are even}, \\
                                +\infty, & \mbox{if $k=3$ and $r,s$ are odd}, \\
                                1, & \mbox{if $k=2$ and $r,s$ have distinct parity}, \\
                                4, & \mbox{if $k=2$ and $r,s$ are even},\\
                                \min\{r,s\}, & \mbox{if $k=2$ and $r,s$ are odd},\\
                                1, & \mbox{if $k=1$ and $r,s$ are even}, \\
                                2, & \mbox{if $k=1$ and $r,s$ are not both even}.
                              \end{array}\right.
  $$

\noindent
Similarly to the proof of Theorem \ref{th:k_antidim_cyl}, we again separate our exposition into four cases.

\medskip
\noindent
\textbf{\underline{Case $k=4$:}} Assume $r,s$ are odd. Hence, it can be readily observed that any vertex of $C_r\Box C_s$ is a $4$-ARS, which means $\adim_4(C_r\Box C_s)=1$. On the other hand, if $r,s$ are even, then we consider any vertex $(u_i,v_j)$ and its unique diametral vertex in $C_r\Box C_s$. It can be easily observed that these two vertices form a $4$-ARS. Since any single vertex of $C_r\Box C_s$ forms only a $1$-ARS (when $r,s$ are even), the result $\adim_4(C_r\Box C_s)=2$ follows.

\medskip
\noindent
\textbf{\underline{Case $k=3$:}}  Let $r,s$ be even. Since any single vertex of $C_r\Box C_s$ has a unique diametral vertex, it forms only a $1$-ARS. Thus, if a vertex $(u_i,v_j)$ belongs to some $3$-ARS of $C_r\Box C_s$, then its unique diametral vertex also belong to such set, and so $\adim_3(C_r\Box C_s)\ge 2$. Also, notice that $\adim_3(C_r\Box C_s)=2$ is not possible because any two diametral vertices of $C_r\Box C_s$ form a $4$-ARS of $C_r\Box C_s$ when $r,s$ are even. Thus, $\adim_3(C_r\Box C_s)\ge 3$. In addition, by the same reason, and by the structure of $C_r\Box C_s$, it cannot be $\adim_3(C_r\Box C_s)=3$, and thus, $\adim_k(C_r\Box C_s)\ge 4$. Consider the set  $S=\{(u_0,v_0),(u_0,v_1),(u_{r/2},v_{s/2}),(u_{r/2},v_{s/2+1})\}$. Notice that $(u_0,v_0),(u_{r/2},v_{s/2})$ are diametral as well as $(u_0,v_1),(u_{r/2},v_{s/2+1})$. We shall consider the equivalence classes of $\mathcal{Z}_{S}$. Four of them are as follows, which are those classes in which their vertices are adjacent to vertices of the set $S$.
\begin{align*}
  Z^1 & = \{(u_0,v_{s-1}),(u_1,v_0),(u_{r-1},v_0)\} \\
  Z^2 & = \{(u_0,v_{2}),(u_1,v_1),(u_{r-1},v_1)\} \\
  Z^3 & = \{(u_{r/2},v_{s/2-1}),(u_{r/2+1},v_{s/2}),(u_{r/2-1},v_{s/2})\} \\
  Z^4 & = \{(u_{r/2},v_{s/2+2}),(u_{r/2+1},v_{s/2+1}),(u_{r/2-1},v_{s/2+1})\}
\end{align*}
Notice that these four classes have each cardinality 3. Moreover, by the structure of $C_r\Box C_s$, since $(u_0,v_0),(u_{r/2},v_{s/2})$ are diametral as well as $(u_0,v_1),(u_{r/2},v_{s/2+1})$, it follows that the remaining classes of $\mathcal{Z}_{S}$ (if they exist) have cardinality at least 4. Thus, $S$ is a $3$-ARS of $C_r\Box C_s$, which leads to the desired equality $\adim_3(C_r\Box C_s)=4$.

Suppose next that $r,s$ are odd. Let $S$ be a $3$-ARS of $C_r\Box C_s$ and let $(u_i,v_j)\in S$. Since $r,s$ are odd, the vertex $(u_i,v_j)$ has $4$ diametral vertices, which would either form a whole class of $\mathcal{Z}_{S}$, or part of them will form a whole class of $\mathcal{Z}_{S}$, or every of them are in $S$.
The first situation cannot happen, since there is no vertex of $C_r\Box C_s$, other than $(u_i,v_j)$, having the same distance to all these $4$ such vertices.
In the second situation, it can only happen that exactly three vertices will form a whole class of $\mathcal{Z}_{S}$.
That is, exactly one of such vertices, say $(u_{i'},v_{j'})$, is in $S$.
However, from the other three remaining vertices (being diametral to $(u_i,v_j)$), only two of them have the same distance to $(u_{i'},v_{j'})$.
Thus, $S$ would be only a $2$-ARS, which is not possible.
Consequently, we deduce that the $4$ diametral vertices from $(u_i,v_j)$ must be in $S$.
But then, by the same reasons, the diametral vertices of these previous mentioned $4$ vertices (diametral from $(u_i,v_j)$) must be in $S$ as well.
Therefore, by following this iterative procedure, and due to the structure of the torus $C_r\Box C_s$, in each step at least one new vertex is added to $S$. Thus, it must happen that the whole vertex set of $C_r\Box C_s$ must be in $S$, which is not possible, and it allows to assert that $C_r\Box C_s$ has no $3$-ARS when $r,s$ are odd.

\medskip
\noindent
\textbf{\underline{Case $k=2$:}}  We need to differentiate some situations depending on the parity of $r$ and $s$.
\begin{itemize}
\item \underline{$r,s$ have different parity.} Hence, any vertex $(u_i,v_j)$ of $C_r\Box C_s$ forms a $2$-ARS, since it has exactly two diametral vertices, which will form an equivalence class $Z^i\in \mathcal{Z}_{\{(u_i,v_j)\}}$ of cardinality two, and any other equivalence class with respect to this vertex has cardinality at least $4$. Thus, $\adim_2(C_r\Box C_s)=1$ in such situation.
\item \underline{$r,s$ are even.} Hence, any single vertex forms a $1$-ARS, since it has exactly one diametral vertex.
Thus $\adim_2(C_r\Box C_s)\ge 2$ and let $S'$ be a $2$-ARB of $C_r\Box C_s$. The argument above also means that if a vertex $(u_i,v_j)\in S'$, then also its unique diametral vertex $(u_{i+r/2},v_{j+s/2})$ is in $S'$. The case $\adim_2(C_r\Box C_s)=2$ is then not possible, because any two diametral vertices of $C_r\Box C_s$ form a $4$-ARS. Thus, $\adim_2(C_r\Box C_s)\ge 3$. But then, again the same situation about the diametral vertices (and the structure of $C_r\Box C_s$) implies that $\adim_2(C_r\Box C_s)\ge 4$. We now consider the set $S''=\{(u_0,v_0),(u_0,v_{s/2}),(u_{r/2},v_0),(u_{r/2},v_{s/2})\}$. Notice that $(u_0,v_0),(u_{r/2},v_{s/2})$ are diametral in $C_r\Box C_s$, as well as $(u_0,v_{s/2}),(u_{r/2},v_0)$. Moreover, the vertices of the copies of $C_s$ corresponding to the vertices $v_0$ and $v_{r/2}$ in $C_r\Box C_s$ form $s-2$ equivalence classes of cardinality $2$ in $\mathcal{Z}_{S''}$, as well as, the vertices of the copies of $C_r$ corresponding to the vertices $u_0$ and $u_{s/2}$ in $C_r\Box C_s$ form $r-2$ equivalence classes of cardinality $2$. The remaining classes of $\mathcal{Z}_{S''}$ have cardinality $4$. As a consequence, we deduce that $S''$ is a $2$-ARS. Therefore $\adim_2(C_r\Box C_s)=4$.
\item \underline{$r,s$ are odd.} This part of the proof uses a somehow different technique. To this end, we shall first present some necessary tools and terminologies in order to simplify the notation. By a \emph{row} or a \emph{column} of $T{=C}_r \square {C}_s$ we mean a copy of $C_r$ or of $C_s$ in $T$.

\begin{property}
\label{farthest1}
For any vertex $x=(u_i, v_j)$ the set of eccentric vertices of $x$ is
\begin{equation*}
    \begin{split}
    \{x^{\text{\tiny{-\,-}}}, x^{\text{\tiny{+\,-}}}, x^{\text{\tiny{+\,+}}}, x^{\text{\tiny{-\,+}}} \},\label{proper1}
    \end{split}
\end{equation*}
\end{property}
where $x^{\text{\tiny{-\,-}}}=(u_{i-\frac{r-1}{2}}, v_{j-\frac{s-1}{2}})$; $ x^{\text{\tiny{+\,-}}}= (u_{i+\frac{r-1}{2}}, v_{j-\frac{s-1}{2}})$; $x^{\text{\tiny{+\,+}}}= (u_{i+\frac{r-1}{2}}, v_{j+\frac{s-1}{2}})$; and, $x^{\text{\tiny{-\,+}}}=(u_{i-\frac{r-1}{2}}, v_{j+\frac{s-1}{2}})$.\\

From Property \ref{farthest1} we immediately obtain that the $\adim_2(C_r\Box C_s)\ge 2$.

First note that, any row of $T$, $U_i=\{u_i\} \square V({C}_s)$, $1\leq i\leq r$ is a $2$-ARS for $T$, since for each $j\in\{1, \dots, s\}$ the pair of vertices $\{(u_{i+\delta}, v_j), (u_{i-\delta}, v_j)\}$ with $1 \le \delta \le \frac{r-1}{2}$
forms an equivalence class in ${\mathcal{Z}}_{U_i}$. Similarly, any column of $T$, $V_j=\{v_j\} \square V({C}_r)$, $1\leq j\leq s$ is a $2$-ARS for $T$ since for each $i\in\{1, \dots, r\}$ the pair of vertices $\{(u_{i}, v_{j-\delta}), (u_{i}, v_{j+\delta})\}$ with $1 \le \delta \le \frac{s-1}{2}$ forms an equivalence class in ${\mathcal{Z}}_{V_j}$. Thus, $2\leq \adim_2(C_r\Box C_s)\le \min\{r,s\}$. \\
Hence, we next focus on sets of vertices $S$ such that $2\leq |S| < \min \{r, s\}$. In the following, we will say that the vertices of a given set are \emph{aligned} if all of them belong to the same row or to the same column of $T$, even if they are not necessarily consecutive. 

The next result gives conditions on aligned vertices, which determine when one of them may define a $1$-ARS. 

\begin{property}
\label{pro_alligned}
Any vertex set $S\subset V(T)$  that is neither a full row nor a full column of $T$, containing 
aligned vertices is a $1$-ARS for $T$.
$($see the proof at the end of the proof of Theorem \ref{th:k_antidim_toru}$)$.
\end{property}

We also have the following property when $S$ does not contain aligned vertices.
\begin{property} \label{pro_no_alligned}
Any vertex set $S$, $2 \leq |S| < \min \{r, s\}$, containing no aligned vertices is a $1$-ARS.
$($see the proof at the end of the proof of Theorem \ref{th:k_antidim_toru}$)$.
\end{property}

Properties \ref{pro_alligned} and \ref{pro_no_alligned} above allow to deduce that $S$ is indeed a $1$-ARS for $T$.
Therefore, we finally conclude that $\adim_2(C_r\Box C_s)= \min\{r,s\}$ when $r,s$ are odd.
\end{itemize}

\medskip
\noindent
\textbf{\underline{Case $k=1$:}} If $r,s$ are even, then any vertex forms a $1$-ARS, since it has exactly one diametral vertex. Thus, $\adim_1(C_r\Box C_s)=1$ in this case. Now, if $r,s$ are not both even, then consider for instance $s$ is odd. Hence, the set $S_1=\{(u_0,v_0),(u_0,v_1)\}$ (notice that $(u_0,v_0),(u_0,v_1)$ are diametral in $C_s$), is a $1$-ARS of $C_r\Box C_s$, since the vertex $(u_0,v_{(s+1)/2})$ forms an equivalence class $Z^i\in \mathcal{Z}_{S_1}$. Thus, $\adim_1(C_r\Box C_s)\le 2$. Also, as any single vertex of $C_r\Box C_s$ is either a $2$-ARS (when $r,s$ are of distinct parity), or a $4$-ARS (when $r,s$ are odd), we deduce that $\adim_1(C_r\Box C_s)=2$, which completes the proof.
\qed

\subsection*{Proof of Property \ref{pro_alligned}}

\noindent
Let us see that any vertex set $S\subset V(T)$  that is neither a full row nor a full column of $T$, containing aligned vertices is a $1$-ARS for $T$.
In the proof we use the notation $p_{xy}$ to denote the shortest path from $x$ to $y$ in $T$. We will also use the fact for three given aligned vertices $x^-, x^+,z\in U_i$, $1\leq i\leq r$. If $x^-$ does not belong to $p_{x^+z}$ and $x^+$ does not belong to $p_{x^-z}$, then the class of $z$ relative to $S_0=\{x^-, x^+\}$ is a singleton. Therefore, the class of $z$ relative to any set $S\supseteq S_0$ such that $z\notin S$ is a singleton as well.\\
Without loss of generality, we assume that $S$ contains two aligned vertices in the same row $U_i$. Otherwise, the proof should just interchange rows and columns.\\
 Since $S$ is not a full row, then $\overline{U}_{i} = U_{i} \setminus S \neq \emptyset$, and so there exists $z = (u_i, v_j)\in \overline{U}_i$. \\
 Let $P\equiv x^- \dots z \dots x^+$ be the only path containing $z$ such that its end-vertices $x^-, x^+\in S$ and all its intermediate vertices belong to $\overline U_i$.  Consider the possible cases:
 \begin{itemize}
      \item $P$ is the shortest path from $x^-$ to $x^+$.
      Thus, $z\in P=p_{x^-, x^+}$.
      Then $x^-\notin p_{x^+,z}$ and $x^+\notin p_{x^-,z}$. By the remark above, the class $Z^{z}$ is the singleton $\{z\}$.
      \item $P$ is not the shortest path from $x^-$ to $x^+$.
     Let $E^-=\arg\max\{ d(x^-, y): y\in U_i\}=\{y^-, y^+\}$, where $y^-$ is such that $x^+\notin p_{x^-y^-}$ and $x^+\in p_{x^-y^+}$. Since $|P|\geq\frac{r-1}{2}$, it follows $y^-,y^+\in P\setminus S$.
     Indeed, $x^-\notin p_{x^+,y^-}$. Otherwise, the length of the path from $x^+$ to $y^-$ containing $x^-$ would be greater than $ d(x^-, y^-)\geq \frac{r-1}{2}$, contradicting that $p_{x^+,y^-}$ is a shortest path in $U_i$.
   \end{itemize}
The arguments of the two cases above allow to conclude that $S$ is indeed a $1$-ARS for $T$.
\qed

\subsection*{Proof of Property \ref{pro_no_alligned}}

\noindent
Let us see that any vertex set $S\subset V(T)$  that is neither a full row nor a full column of $T$, containing aligned vertices is a $1$-ARS for $T$.\\
Let $x=(u_{i}, v_j)\in S$. Due to the symmetry of the torus, we can assume without loss of generality that $x = (u_\frac{r-1}{2}, v_\frac{s-1}{2})$. Consider the following cases:
\begin{itemize}
    \item  \underline{$(u_{i+1}, v_{j+1})\in S$.}\\
    Let $y = (u_{i+1}, v_{j+1})=(u_{\frac{r-1}{2}+1}, v_{\frac{s-1}{2}+1})$ and $x^* = (u_{0}, v_{\frac{s-1}{2}})$. Since $\min \{r, s\}\geq 3$, then $x^*\ne x$ and $x^*\ne y$. Moreover, $x^*\notin S$ because $x$ and $x^*$ are aligned.
    A shortest path from $x^*$ to $x$ is $(u_0, v_j)-(u_{1}, v_j)\dots (u_{j}, v_{j})$ so $d_{x^*, x}=\frac{r-1}{2}+1$. Moreover, a shortest path from $x^*$ to $y$ is $(u_0, v_j)-(u_{s-1}, v_j)-(u_{s-1}, v_{j+1})\dots (u_{j+1}, v_{j+1})$ so  $d_{x^*, y}=1+1+\left((r-1)-(\frac{r-1}{2}+1)\right)$. Finally, it can be observed that there is no other vertex $z\ne x^*$ such that $d_{z, x}=\frac{r-1}{2}+1$ and $d_{z, y}=2+\left((r-1)-(\frac{r-1}{2}+1)\right)$.
    Hence, the singleton $\{(u_{0}, v_{\frac{s-1}{2}})\}$ forms an equivalence class of ${\mathcal{Z}}_S$.
  \item  \underline{$(u_{i+1}, v_{j+1})\notin S$.} We need now two different situations.
    \begin{itemize}\item[(i)] \underline{$(u_0, v_0)\in S$.}\\
     Let now $y= (u_0, v_0)$ and $x^* = (u_{\frac{r-1}{2}+1}, v_{\frac{s-1}{2}+1})$, which by hypotheses does not belong to $S$.\\
   Furthermore, $x^*\ne y$ because $\min\{r, s\} \ge 3$.  Indeed, $d_{x, x^*} = 2$, and $d_{y, x^*} = \frac{r+s}{2}$, since no shortest path from $y$ to $x^*$ contains $x$, as all such paths \emph{leave} $y$ in a direction \emph{opposite} to $x$. Moreover, it can now be observed that $x^*$ is the only vertex such that $d_{x, x^*} = 2$ and $d_{y, x^*} = \frac{r+s}{2}$.
    Hence the singleton $\{x^*\}$ forms an equivalence class of ${\mathcal{Z}}_S$.
  \item[(ii)] \underline{$(u_0, v_0)\notin S$.}\\
      By the symmetry of the torus we can now assume that $y = (u_i, v_j) \in S$ with $i < \frac{r-1}{2}$ and $j < \frac{s-1}{2}$.\\
      Let now $x^*=x^{\text{\tiny{-\,-}}}$ and note that the only vertices at the same distance from $x$ as $x^{{\text{\tiny{-\,-}}}}$ are $x^{{\text{\tiny{+\,-}}}}$, $x^{\text{\tiny{+\,+}}}$, and $x^{\text{\tiny{-\,+}}}$. However,
      $$d_{y, x^{\text{\tiny{-\,-}}}}  = d_{y, x^{\text{\tiny{+\,-}}}} -1 = d_{y, x^{\text{\tiny{-\,+}}}}-1 = d_{y, x^{\text{\tiny{+\,+}}}}-2.$$
      Hence, there is no other vertex $z$ such that $d_{x, x^*} = d_{z, x^*}$ and $d_{y, x^*} = d_{y, x^*}$, and so, we conclude that the singleton $\{x^*\}$ forms an equivalence class of ${\mathcal{Z}}_S$.
      \end{itemize}
\end{itemize}
\qed

\subsection*{Proof of Theorem \ref{th:k_antidim_Hamming}}

Let us see that for every $r\ge 4$,
$$\adim_k(K_r\Box K_r)=\left\{\begin{array}{ll}
                                3, & \mbox{if $k=1$}, \\
                                2, & \mbox{if $k=2$}, \\
                                r-k, & \mbox{if $3\le k\le r-2$}, \\
                                r, & \mbox{if $k=r-1$}, \\
                                +\infty, & \mbox{if $r\le k\le 2r-3$}, \\
                                1, & \mbox{if $k=2r-2$}.
                              \end{array}\right.
  $$

\medskip
\noindent
\textbf{\underline{Case $k=1$:}} Since every vertex of $K_r\Box K_r$ forms a $(2r-2)$-ARS and $2r-2\ge 6$, we deduce that $\adim_1(K_r\Box K_r)\ge 2$.  Let $S=\{(u_i,v_j),(u_k,v_l)\}$ be a $1$-ARB and consider the equivalence classes $\mathcal{Z}_S=\{Z^1,\dots,Z^t\}$, for some $t\ge 1$, defined by the equivalence relation $\mathcal{R}_S$. If either $i=k$ or $j=l$, say w.l.g. $i=k$ (notice that it cannot happen that both $i=k$ and $j=l$), then every equivalence class $Z^q\in \mathcal{Z}_S$ has at least $r-2$ vertices. Indeed, such equivalence class consists of the $r-2$ vertices of the form $(u_i,v_\alpha)$. Thus, $S$ is an $(r-2)$-ARS. Since $r\ge 4$, $S$ is a $k$-metric ARS for some $k\ge 2$, which is not possible. If $i\ne k$ and $j\ne l$, then the two vertices $(u_i,v_l)$ and $(u_k,v_j)$ are in the same equivalence class defined by $\mathcal{Z}_S$. Moreover, any other equivalence class in $\mathcal{Z}_S$ has more than two vertices. Thus, $S$ is a $2$-ARS, which again is not possible. As a consequence of these two contradictions, it must hold that $\adim_1(K_r\Box K_r)\ge 3$. On the other hand, the set of three vertices $S_1=\{(u_1,v_1), (u_1,v_2), (u_2,v_1)\}$ forms a $1$-ARS since the vertex $(u_2,v_2)$ forms an equivalence class of $\mathcal{Z}_{S_1}$. Thus, $\adim_1(K_r\Box K_r)\le 3$.

\medskip
\noindent
\textbf{\underline{Case $k=2$:}}  We observe that the set $S_2=\{(u_1,v_1), (u_2,v_2)\}$ forms a $2$-ARS, since the vertices $(u_1,v_2)$, $(u_2,v_1)$ form an equivalence class of $\mathcal{Z}_{S_2}$.
Thus, $\adim_2(K_r\Box K_r)\le 2$. Also $\adim_2(K_r\Box K_r)\ge 2$ holds, and thus the equality follows in this case.

\medskip
\noindent
\textbf{\underline{Case $3\le k\le 2r-3$:}}
Let $X$ be a $k$-ARB of $K_r\Box K_r$. Note that $|X|\ge 2$. Suppose $X$ contains two vertices $(u_i,v_j)$ and $(u_k,v_l)$ such that $i\ne k$ and $j\ne l$. We consider the equivalence classes of $\mathcal{Z}_{X}$. If there is a class in $\mathcal{Z}_{X}$ with all vertices at distance $1$ to $(u_i,v_l)$ and $(u_k,v_j)$, then we obtain a contradiction, because there are only two vertices with such property, which are $(u_i,v_l)$ and $(u_k,v_j)$. This would mean that $X$ is either a $2$-ARS (if neither $(u_i,v_l)$ nor $(u_k,v_j)$ are in $X$), or a $1$-ARS (if either $(u_i,v_l)\in X$ or $(u_k,v_j)\in X$). As a consequence, the two vertices $(u_i,v_l)$ and $(u_k,v_j)$ should be in $X$ as well. By similar arguments, there does not exist a third vertex $(u_{i'},v_{j'})\in X$ such that $i\ne i'\ne k$ and $j\ne j'\ne l$. Thus, $X\subset (V(K_r)\times \{v_j,v_l\})\cup (\{u_i,u_k\}\times V(K_r))$. If there are two vertices $(u_{a},v_b),(u_c,v_d)\in X$ such that $a\ne c$, $b\ne d$, and w.l.g. $a\ne i,k$ (which means $b\in \{j,l\}$) and $d\ne j,l$ (which means $c\in \{i,k\}$), then, using similar arguments as above, we deduce that also $(u_{a},v_d),(u_c,v_b)\in X$. However, this is a  contradiction since $(u_{a},v_d)\notin (V(K_r)\times \{v_j,v_l\})\cup (\{u_i,u_k\}\times V(K_r))$. This means that either $X\subset (V(K_r)\times \{v_j,v_l\})$ or $X\subset (\{u_i,u_k\}\times V(K_r))$. Assume $X\subset (V(K_r)\times \{v_j,v_l\})$ (the other case is symmetric). Using again the same arguments as those used to prove that the vertices $(u_i,v_l)$ and $(u_k,v_j)$ are in $X$, we can show that $(u_h,v_j)\in X$ for some $h\in \{1,\dots,r\}$ if and only if $(u_h,v_l)\in X$. Let $X_j=X\cap (V(K_r)\times \{v_j\})$ and $X_l=X\cap (V(K_r)\times \{v_l\})$ and note that $|X_j|=|X_l|=t$ for some $2\le t\le r$. Hence, the equivalence classes of $\mathcal{Z}_{X}$ are as follows:
\begin{itemize}
  \item Two equivalence classes of cardinality $r-t$, which are the sets $(V(K_r)\times \{v_l\})\setminus (X_l)$ and $(V(K_r)\times \{v_j\})\setminus (X_j)$. These classes could be empty when $t=r$.
  \item $t$ classes of cardinality, $r-2$ which are the sets $\{u_\alpha\}\times V(K_r)\setminus X$ for every $1\le \alpha\le r$ such $(u_\alpha,v_j)\in X$.
  \item A class with all the remaining vertices of cardinality $(r-t)(r-2)$, which could also be  empty if $t=r$.
\end{itemize}
As a consequence, we obtain that $X$ is an $(r-t)$-ARS (indeed an $(r-t)$-ARB), when $t\le r-1$ or an $(r-2)$-ARB when $t=r$. Therefore, we conclude that $k=r-t$ or $k=r-2$ (accordingly). However, if we consider any of the two subsets $X_j$ or $X_l$ of $X$, we observe that both of them are $(r-t)$-ARSs (or $(r-2)$-ARSs), which means that $X$ is not a $k$-ARB, and this is a final contradiction with our first assumption, namely, $X$ does not contain two vertices $(u_i,v_j)$ and $(u_k,v_l)$ such that $i\ne k$ and $j\ne l$.

Consequently, either $X\subset (V(K_r)\times \{v_j\})$ for some $v_j\in V(K_r)$, or $X\subset (\{u_i\}\times V(K_r))$ for some $u_i\in V(K_r)$, say w.l.g. that $X\subset (V(K_r)\times \{v_j\})$. In such a situation, we see that the vertices $(V(K_r)\times \{v_j\})\setminus X$ form an equivalence class of $\mathcal{Z}_{X}$ of cardinality $r-|X|$ when $|X|\le r-1$, and every other equivalence class of $\mathcal{Z}_{X}$ has more vertices. Since $X$ is a $k$-ARB, we deduce that $k=r-|X|$, which means $\adim_{k}(K_r\Box K_r)=|X|=r-k$ for every $3\le k\le r-2$. For the case $|X|\le r-1$ we also have that $k\le r-2$ since $|X|\ge 2$.
If $|X|=r$, then $X$ is an $(r-1)$-ARB, which means $k=r-1$, and so $\adim_{r-1}(K_r\Box K_r)=|X|=r$. In addition, these arguments show that there does not exist $k$-ARSs in $K_r\Box K_r$ for any $r\le k\le 2r-3$.

Finally, any vertex of $K_r\Box K_r$ forms a $(2r-2)$-ARS, which means $\adim_{2r-2}(K_r\Box K_r)=1$. This concludes the proof.
\qed

\end{document}